\newcommand{\rd}{\,\mathrm{d}}
\numberwithin{equation}{section}
\newtheorem{theorem}{Theorem}[section]
\newtheorem{lemma}[theorem]{Lemma}
\newtheorem{definition}[theorem]{Definition}
\newtheorem{remark}[theorem]{Remark}
\def\bx{{\bf x}}
\def\by{{\bf y}}
\def\bz{{\bf z}}
\def\bv{{\bf v}}
\def\cA{\mathcal{A}}
\def\var{\textnormal{var}}
\begin{document}
\title{Collective dynamics of opposing groups with stochastic communication}
\author{Shi Jin\footnote{School of Mathematical Sciences, Institute of Natural Sciences, MOE-LSEC and SHL-MAC, Shanghai Jiao Tong University, Shanghai 200240, China. Email: shijin-m@sjtu.edu.cn. SJ was partially supported by NSFC
grants Nos. 31571071 and 11871297.} and Ruiwen Shu\footnote{University of Maryland, College Park, 4176 Campus Drive, College Park, MD 20742, USA. Email: rshu@cscamm.umd.edu. RS was supported in part by NSF grants DMS16-13911, RNMS11-07444 (KI-Net) and ONR grant N00014-1812465.}}
\maketitle
\begin{abstract}
We propose models describing the collective dynamics of two {\it opposing} groups of individuals with {\it stochastic} communication. Individuals from the same group are assumed to align in a stochastic manner, while individuals from different groups are assumed to anti-align. Under reasonable assumptions, we prove the large time behavior of {\it separation}, in the sense that the variation inside a group is much less than the distance between the two groups. The separation phenomena are verified by numerical simulations.
\end{abstract}

%%%%%%%%%%%%%%%%%%%%%%%%%%%%%%%%%%%%%%%
\section{Introduction}
%%%%%%%%%%%%%%%%%%%%%%%%%%%%%%%%%%%%%%%

Collective dynamics~\cite{BelHT,BCL,BHT,CS1,CS2,DM,DMPW,HHK,HT,HK,MT,Tos} is an important phenomenon in social and biological sciences. It describes the dynamics of a large group of agents resulted from their communications or interactions. For example, in opinion dynamics, for a group of $N$ people, the opinion of the $i$-th person at time $t$ is modeled by a vector $\bz_i(t)\in \mathbb{R}^d$, whose time evolution follows the first order dynamics
\begin{equation}\label{eq0}
\dot{\bz}_i = \frac{1}{N}\sum_{i'\ne i} \psi_{i,i'}(\bz_i,\bz_{i'})(\bz_{i'}-\bz_i),\quad i=1,\dots,N.
\end{equation} 
Here $\psi_{i,i'}(\bz_i,\bz_{i'})$ is the interaction kernel for a pair of individuals $i$ and $i'$, describing how their communication affects their opinions. One usually assumes 
\begin{equation}\label{assu0}
\psi_{i,i'}(\bz_i,\bz_{i'})=\psi(|\bz_i-\bz_{i'}|) \ge 0.
\end{equation}
This means (1)  people in the group are {\it indistinguishable}, i.e., the interaction law does not depend on their indices but their distances; (2) people tend to {\it align} their opinions with each other via communication. Under the assumption \eqref{assu0}, Motsch and Tadmor~\cite{MT} proved {\it consensus} of opinions $\lim_{t\rightarrow\infty} \max_{i,i'}|\bz_i-\bz_{i'}| = 0$ for lower bounded interaction kernel, and {\it formation of clusters} for compactly supported interaction kernel under certain assumptions on initial data.

However, there are realistic situations where the assumption \eqref{assu0} may not be true:
\begin{itemize}
\item The communication of different pairs of people may be drastically different. For example, people with similar backgrounds or within the same community are more likely to align their opinions, while people from opposing groups may tend to {\it anti-align} their opinions, i.e., make one's opinion farther from the others'.
\item Not every pair of people are necessarily communicating with each other all the time, and their chance of communication may be time dependent. In fact, the communication among people has intrinsic feature of {\it stochasticity}: the best one cay say is that a pair of people communicate with certain probability.
\end{itemize}
In this work, we propose an opinion dynamics model in the framework of \eqref{eq0} to capture both issues mentioned above. To model the anti-alignment between opposing groups, we consider two groups of people, with opinions $\{\bx_i(t)\}_{i=1}^{N_1}$ and $\{\by_j(t)\}_{j=1}^{N_2}$, satisfying the first order model\footnote{From now on, summations in $i,i'$ always run from 1 to $N_1$, and $j,j'$ from 1 to $N_2$. $\sum_{i'\ne i}$ means summation over the index $i'$ which is not equal to $i$. }
\begin{equation}\label{eq}\left\{\begin{split}
& \dot{\bx}_i = \frac{1}{N_1}\sum_{i'\ne i} \psi^+_{i,i'}(t) (\bx_{i'}-\bx_i) - \frac{1}{N_2}\sum_{j} \psi^-_{i,j}(t) (\by_j-\bx_i) ,\\
& \dot{\by}_j = \frac{1}{N_2}\sum_{j'\ne j} \psi^+_{j,j'}(t) (\by_{j'}-\by_j) - \frac{1}{N_1}\sum_{i} \psi^-_{j,i}(t) (\bx_i-\by_j) .\\
\end{split}\right.\end{equation}
Here $\psi^+_{i,i'}=\psi^+_{i',i}\ge 0$ and $\psi^+_{j,j'}=\psi^+_{j',j}\ge 0$ describe the alignment of opinions between people in the same group, and $\psi^-_{i,j}=\psi^-_{j,i}\ge 0$ describes the anti-alignment between people in opposing groups. 

We propose two scenarios to describe the stochasticity of communication. In the first scenario, we assume that whether a pair of people are communicating is random but do not change with time. To be precise, we assume $\psi^+_{i,i'}$ and $\psi^+_{j,j'}$ are constants, given randomly by the Bernoulli distribution
\begin{subequations}\label{rand1}
\begin{equation}\label{rand1_1}
\psi^+_{i,i'} \sim B(p) =  \left\{\begin{split}
& 1,\quad \textnormal{with probability $p$} \\
& 0,\quad \textnormal{with probability $1-p$} \\
\end{split}\right.,\quad \psi^+_{j,j'}  \sim B(p),
\end{equation}
for some fixed $0<p<1$, representing the rate of communication between people in the same group. We assume similarly for $\psi^-_{i,j}$:
\begin{equation}\label{rand1_2}
\psi^-_{i,j}  \sim B(q),
\end{equation}
\end{subequations}
for some fixed $0<q<1$, representing the rate of communication between people in opposing groups. The random variables $\psi^{+}_{i,i'}$, $\psi^{+}_{j,j'}$ and $\psi^{-}_{i,j}$ are assumed to be mutually independent for different indices $i,j,i',j'$.

In the second scenario, we assume that whether a pair of people are communicating is random, and change at time $k\tau,\,k\in\mathbb{Z}$, where $\tau>0$ is a fixed time step. To be precise  we assume $\psi^+_{i,i'}$ and $\psi^+_{j,j'}$ are functions of time, being constant on each interval $[k\tau,(k+1)\tau),\,k=0,1,\dots$, given randomly by
\begin{subequations}\label{rand2}
\begin{equation}\label{rand2_1}
\psi^+_{i,i'}|_{[k\tau,(k+1)\tau)} = \psi^{+,k}_{i,i'}   \sim B(p),\quad \psi^+_{j,j'}|_{[k\tau,(k+1)\tau)} =\psi^{+,k}_{j,j'}  \sim B(p),
\end{equation}
and similarly for $\psi^-_{i,j}$:
\begin{equation}\label{rand2_2}
  \psi^-_{i,j}|_{[k\tau,(k+1)\tau)} =\psi^{-,k}_{i,j}\sim B(q).
\end{equation}
\end{subequations}
%The random variables $\psi^{+,k}_{i,i'}$, $\psi^{+,k}_{j,j'}$ and $\psi^{-,k}_{i,j}$ are assumed to be mutually independent for different indices $i,j,i',j',k$. 
In this model, after time $\tau$, the communication functions will be resampled, though by the same distribution.

Due to the alignment effect within a group and anti-alignment effect between different groups, it is natural to expect that the opinions of different groups will reach {\it separation}, i.e., the disagreement of opinions between any two people from different groups is much larger than that inside the same group. In our main results (Theorems \ref{thm1} and \ref{thm2}) we prove exactly such large time behavior of \eqref{eq} with large probability for both scenarios, under reasonable assumptions on parameters and initial data. This forms sharp contrast with the large time behavior of {\it consensus} proved in~\cite{MT} for global alignment interactions.

In this paper we assume that $\psi^+_{i,i'}$, $\psi^+_{j,j'}$ and $\psi^-_{i,j}$ are independent of $\bx_i$ and $\by_j$. This suits more the era of modern social media by which people can communicate easily regardless of the spatial distance between them. Although this assumption may seem restrictive, we believe that our main result, which is based on energy estimates, can be generalized to some cases with $\psi^+_{i,i'}$, $\psi^+_{j,j'}$ and $\psi^-_{i,j}$ depending on $\bx_i$ and $\by_j$. This is left as future work.

We remark that our model \eqref{eq} is closely related to the stochastic block model (SBM)~\cite{HLL}. We refer to~\cite{Abbe} and references therein for a thorough review of SBM, and some numerical experiments of a model we study in~\cite{JLL}. In its simplest version, SBM is a random graph, whose vertices are two groups with sizes $N_1$ and $N_2$ respectively, and the probability of the appearance of an edge depends on whether the two vertices are in the same group. Therefore the time evolution problem of its adjacency matrix is similar to \eqref{eq} with \eqref{rand1}, but instead of having anti-alignment, here the interaction between different groups is still alignment, but with a  communication rate smaller than that within the same group. In fact, as mentioned in~\cite{Abbe}, this evolution problem from SBM may not have a large time behavior of separation (i.e., the sign of components of the dominating eigenvector may not distinguish the two groups), and the main goal of studying the stochastic block model is to recognize the two groups (usually called {\it clustering}) using other methods. Although our model \eqref{eq} does not serve as a method for clustering,  its large time behavior is interesting for its own sake, from the perspective of opinion and other collective dynamics.

Our model \eqref{eq} with \eqref{rand2} can be viewed as a randomized version of a deterministic model, with coefficients $\psi^+_{i,i'}=\psi^+_{j,j'}=p$, $\psi^-_{i,j}=q$, in the same spirit as the deterministic bi-flocking model in~\cite{FHJ} and its applications to the data clustering problem~\cite{JLL}. Since it is straightforward to see the separation for this deterministic model, our separation result for \eqref{rand2} provides evidence that randomizing an interacting particle system may preserve its large time behavior with large probability, and therefore suggests that RBM could be effective to capture certain large time behavior of interacting particle systems. 

This paper is organized as follows: in Section 2 we introduce some basic notations and state our main results. In Section 3 we give some probabilistic descriptions of the coefficient matrices given by \eqref{rand1} and \eqref{rand2}, which are essential to the proof of the main results. In Section 4 we prove our main results. In Section 5 we give some numerical results verifying our main results. The paper is concluded in Section 6.

%%%%%%%%%%%%%%%%%%%%%%%%%%%%%%%%%%%%%%%
\section{Notations and main results}
%%%%%%%%%%%%%%%%%%%%%%%%%%%%%%%%%%%%%%%

We first give a precise description of the {\it separation} of locations (i.e., opinions of people) of two groups. Define the mean locations
\begin{equation}
\bar{\bx} = \frac{1}{N_1}\sum_i \bx_i,\quad \bar{\by} = \frac{1}{N_2}\sum_j \by_j,
\end{equation}
and the deviation of locations
\begin{equation}
\hat{\bx}_i = \bx_i-\bar{\bx},\quad \hat{\by}_j = \by_j-\bar{\by}.
\end{equation}
Define the variance of locations within each group:
\begin{equation}
\var(\bx) = \frac{1}{N_1}\sum_i|\hat{\bx}_i|^2,\quad \var(\by) = \frac{1}{N_2}\sum_j|\hat{\by}_j|^2.
\end{equation}
Then the relative size between $|\bar{\bx}-\bar{\by}|^2$ and $\var(\bx)+\var(\by)$ is an indicator of group separation in the sense of $L^2$: if $|\bar{\bx}-\bar{\by}|^2$ is much larger than $\var(\bx)+\var(\by)$, then the two groups are well-separated in an average sense.

We also concern with the separation in the $L^\infty$ sense, meaning that the two groups are completely separated by some hyperplane. If each $|\hat{\bx}_i|$ and $|\hat{\by}_j|$ is much smaller than $|\bar{\bx}-\bar{\by}|$, then the two group are completely separated by the perpendicular bisector of the segment connecting $\bar{\bx}$ and $\bar{\by}$.

\begin{definition}
We say an event $\cA_N$ depending a large parameter $N$ \emph{happens with large probability} if 
\begin{equation}
\lim_{N\rightarrow\infty} P(\cA_N) = 1.
\end{equation}
\end{definition}

We first state our separation result for the first scenario, i.e., \eqref{eq} with \eqref{rand1}.

\begin{theorem}\label{thm1}
Consider \eqref{eq} with parameters given by \eqref{rand1}. Assume $N_1$ and $N_2$ are comparable, in the sense that there exists constant $\kappa>0$ such that 
\begin{equation}\label{kappa}
\frac{1}{\kappa} N_1\le N_2 \le \kappa N_1.
\end{equation}
Then for sufficiently large $N= \min\{N_1,N_2\}$ (in terms of $p$, $q$ and $\kappa$), there exist constants $\lambda_-(N),\,\lambda_+(N)$, with
\begin{equation}\label{lambda}
\lambda_-(N) \le  \frac{C(p,q,\alpha)}{N^{1-\alpha}},\quad \lambda_+(N) \ge  c(p,q,\alpha)N^{1-\alpha},\quad \textnormal{for any }0<\alpha<1,
\end{equation}
such that, with large probability, if the initial data of \eqref{eq} satisfies
\begin{equation}\label{thm1_1}
\lambda(0):= \frac{\var(\bx(0)) + \var(\by(0))}{|\bar{\bx}(0)-\bar{\by}(0)|^2} \le \lambda_+,
\end{equation}
then there holds the $L^2$ separation
\begin{equation}\label{thm1_2}
\lambda(t)= \frac{\var(\bx(t)) + \var(\by(t))}{|\bar{\bx}(t)-\bar{\by}(t)|^2} \le \lambda_- + (\lambda(0)-\lambda_-) e^{-\mu t},\quad \mu=p/2,
\end{equation}
and $L^\infty$ separation
\begin{equation}\label{thm1_3}
\tilde{\lambda}(t) := \frac{\max_i |\hat{\bx}_i(t)|^2+\max_j |\hat{\by}_j(t)|^2}{|\bar{\bx}(t)-\bar{\by}(t)|^2} \le C(p,q,\kappa,\alpha)(\lambda_- + \tilde{\lambda}(0)e^{-\mu t}).
\end{equation}
\end{theorem}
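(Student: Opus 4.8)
The plan is to run two coupled differential inequalities, one for the within-group energy $V(t)=\var(\bx(t))+\var(\by(t))$ and one for the squared separation $D(t)=|\bar\bx(t)-\bar\by(t)|^2$, on a single high-probability event $\cE_N$ on which the random coefficients behave as in Section~3. Concretely I would take $\cE_N$ to be the intersection of the events that the two within-group graph Laplacians have algebraic connectivity $\ge\tfrac p2N_i$ and all degrees $D^+_{ii}:=\sum_{i'\ne i}\psi^+_{i,i'}$ lie in $[\tfrac p2N_1,2pN_1]$ (likewise for the $\by$ group); that all row and column sums $r_i:=\tfrac1{N_2}\sum_j\psi^-_{i,j}$, $c_j:=\tfrac1{N_1}\sum_i\psi^-_{i,j}$ and the grand mean $\bar q:=\tfrac1{N_1N_2}\sum_{i,j}\psi^-_{i,j}$ are within $O(\sqrt{\log N/N})$ of $q$; and that $\sum_i(r_i-\bar q)^2,\ \sum_j(c_j-\bar q)^2\le C\log N$. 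Each of these has probability tending to $1$ (by the estimates of Section~3), so $\cE_N$ does too, and everything below is deterministic on $\cE_N$.

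\emph{Mean and energy estimates.} By the symmetry $\psi^+_{i,i'}=\psi^+_{i',i}$ the intra-group terms cancel in $\dot{\bar\bx}$ and $\dot{\bar\by}$, leaving $\tfrac{d}{dt}(\bar\bx-\bar\by)=2\bar q(\bar\bx-\bar\by)-\tfrac2{N_1N_2}\sum_{i,j}\psi^-_{i,j}(\hat\by_j-\hat\bx_i)$; since $\sum_j\hat\by_j=0$ only the fluctuations $c_j-\bar q$ survive in the error term, and one obtains $\dot D\ge(4q-o(1))D-C\sqrt{\tfrac{\log N}{N}}\sqrt{DV}$. For the energy, $\tfrac{d}{dt}\tfrac12\var(\bx)=\tfrac1{N_1}\sum_i\hat\bx_i\cdot\dot\bx_i$; the intra-group part is the graph-Laplacian dissipation $-\tfrac1{2N_1^2}\sum_{i,i'}\psi^+_{i,i'}|\hat\bx_i-\hat\bx_{i'}|^2\le-\tfrac p2\var(\bx)$, and expanding $\by_j-\bx_i=-(\bar\bx-\bar\by)+\hat\by_j-\hat\bx_i$ in the anti-alignment part produces a forcing $\lesssim\sqrt{\tfrac{\log N}{N}}\sqrt{DV}$ (again only $r_i-\bar q$ fluctuations survive because $\sum_i\hat\bx_i=0$), a variance-inflating term $\tfrac1{N_1}\sum_i r_i|\hat\bx_i|^2\le(q+o(1))\var(\bx)$, and a cross term $-\tfrac1{N_1N_2}\sum_{i,j}\psi^-_{i,j}\hat\bx_i\cdot\hat\by_j$ handled by AM--GM. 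Adding the analogous estimate for $\by$, the $O(q)$ pieces total at most $4qV$, giving $\dot V\le(4q-p+o(1))V+C\sqrt{\tfrac{\log N}{N}}\sqrt{DV}$.

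\emph{Closing the $L^2$ bound.} The structural heart is that the $4q$ growth in $\dot V$ is cancelled by the $4q$ growth in $\dot D$ upon passing to $\lambda=V/D$: $\dot\lambda=\dot V/D-\lambda\,\dot D/D\le(-p+o(1))\lambda+C\sqrt{\tfrac{\log N}{N}}(\sqrt\lambda+\lambda^{3/2})$. I would run a continuity (bootstrap) argument on the a priori bound $\lambda(t)\le\lambda_+$ with $\lambda_+:=c(p,q,\alpha)N^{1-\alpha}$: then $\sqrt{\tfrac{\log N}{N}}\,\lambda^{3/2}\le\sqrt{\log N}\,N^{-\alpha/2}\lambda=o(1)\lambda$, and Young's inequality turns $\sqrt{\tfrac{\log N}{N}}\sqrt\lambda$ into $\tfrac p4\lambda+C\tfrac{\log N}{N}$, so $\dot\lambda\le-\tfrac p2\lambda+C\tfrac{\log N}{N}$ whenever $\lambda\le\lambda_+$. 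The right side is negative once $\lambda>\lambda_-:=\tfrac{2C}{p}\tfrac{\log N}{N}\le C(p,q,\alpha)N^{-(1-\alpha)}$, so $\lambda$ never exceeds $\max\{\lambda(0),\lambda_-\}\le\lambda_+$, the bootstrap closes, and Gr\"onwall's lemma gives \eqref{thm1_2} with $\mu=p/2$.

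\emph{The $L^\infty$ bound and the main difficulty.} For \eqref{thm1_3} I localize the energy estimate: for fixed $i$, $\tfrac{d}{dt}\tfrac12|\hat\bx_i|^2=\hat\bx_i\cdot(\dot\bx_i-\dot{\bar\bx})$, and using $D^+_{ii}\ge\tfrac p2N_1$ on the diagonal term, Cauchy--Schwarz with $\sum_{i'}\psi^+_{i,i'}|\hat\bx_{i'}|^2\le N_1\var(\bx)$ on the off-diagonal term, and the same bookkeeping on the anti-alignment terms, one gets $\tfrac{d}{dt}|\hat\bx_i|^2\le(2q-p+o(1))|\hat\bx_i|^2+C\bigl(V+\tfrac{\log N}{N}D\bigr)$. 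Setting $h_i=|\hat\bx_i|^2/D$ and using the lower bound on $\dot D$ once more gives $\dot h_i\le-(p+2q+o(1))h_i+C\lambda(t)+C\tfrac{\log N}{N}$; since $p+2q>\mu=p/2$, feeding in the bound $\lambda(t)\le\lambda_-+\lambda(0)e^{-\mu t}$ from the previous step and applying Gr\"onwall (no logarithmic loss in the rate) yields $h_i(t)\le C(\lambda_-+\tilde\lambda(0)e^{-\mu t})$ uniformly in $i$, using $h_i(0)\le\tilde\lambda(0)$ and $\lambda(0)\le\tilde\lambda(0)$; the same for the $\by$-indices, and summing the two maxima gives \eqref{thm1_3}. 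The main obstacle is twofold: the genuinely hard input is the probabilistic control of Section~3 — above all the uniform spectral-gap bound $\lambda_2(L^+)\gtrsim pN$ for the (essentially Erd\H{o}s--R\'enyi) within-group graph together with simultaneous concentration of all degrees and row/column sums — which I take as given; and inside the theorem's proof the delicate point is not any single estimate but the realization that anti-alignment inflates the within-group variance at an $O(q)$ rate (so $V$ need not decay in absolute terms) while it inflates $D$ at the strictly larger rate $4q$, the two $4q$'s cancelling in $\dot\lambda$, after which the bootstrap confinement $\lambda\le\lambda_+\sim N^{1-\alpha}$ is exactly what is needed to absorb the residual $\lambda^{3/2}$ nonlinearity.
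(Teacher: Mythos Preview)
Your proposal is correct and follows the same route as the paper: derive coupled differential inequalities for $D=|\bar\bx-\bar\by|^2$ and $V=\var(\bx)+\var(\by)$ on a high-probability event (the paper's condition~\eqref{psi_cond}), pass to the ratio $\lambda=V/D$ so that the $O(q)$ growth rates cancel, and then localize to each $|\hat\bx_i|^2$ for the $L^\infty$ bound. The only differences are presentational: the paper splits the $\sqrt{DV}$ cross terms by Young's inequality up front and packages the ratio step as a bi-stable ODE lemma (Lemma~\ref{lem_ode}), which is equivalent to your bootstrap, and for the $L^\infty$ step the paper estimates $|\hat\bx_i|^2$ and $D$ separately (requiring an auxiliary time $t_1$ before the growth of $D$ is usable) whereas your direct use of $h_i=|\hat\bx_i|^2/D$ is slightly cleaner.
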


The meaning of this theorem is as follows: for large $N$, with large probability, if initially the means of the two groups are not too close (compared to their variances), then for large time the two groups separate at exponential rate, in both $L^2$ and $L^\infty$ sense.

The proof of this theorem is based on $L^2$ estimates on $|\bar{\bx}-\bar{\by}|^2$ and $\var(\bx) + \var(\by)$, using certain good properties of the coefficient matrices which hold with large probability (see Section~\ref{sec_prob}). Then the $L^2$ separation is proved by an ODE stability argument (see Lemma \ref{lem_ode}). Using the $L^2$ result, we analyze the evolution of a single $\hat{\bx}_i$ and obtain the $L^\infty$ separation. 

Next we state our separation result for the second scenario, i.e., \eqref{eq} with \eqref{rand2}.

\begin{theorem}\label{thm2}
Consider \eqref{eq} with parameters given by \eqref{rand2}, and assume \eqref{kappa}. 
Then for any fixed $\Lambda>0$, if $N= \min\{N_1,N_2\}$ is sufficiently large (in terms of $\Lambda$, $\tau$, $p$, $q$ and $\kappa$), then with large probability, if the initial data of \eqref{eq} satisfies
\begin{equation}\label{thm2_1}
\tilde{\lambda}(0) := \frac{\max_i |\hat{\bx}_i(0)|^2+\max_j |\hat{\by}_j(0)|^2}{|\bar{\bx}(0)-\bar{\by}(0)|^2} \le \Lambda,
\end{equation}
then there holds the $L^\infty$ separation at some large time $T=T(\Lambda,\tau,p,q,\kappa)$, in the sense that there exists a constant unit vector $\bv$ such that 
\begin{equation}\label{thm2_2}
\max_i \Big(\bx_i(t)\cdot \bv \Big) < \min_j \Big(\by_j(t)\cdot \bv \Big) ,
\end{equation}
for all $t\ge T$.
\end{theorem}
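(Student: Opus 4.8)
The plan is to split the argument into a probabilistic \emph{short-time} part, which produces, at some finite time $T$, a hyperplane strictly separating the two groups, and a deterministic \emph{long-time} part, which shows such a hyperplane keeps separating them for all $t\ge T$. Only the short-time part will use the resampled randomness, and only on the finitely many windows $[k\tau,(k+1)\tau)$ with $k\tau<T$, so a union bound over those windows will suffice; this is the device that sidesteps the fact that the coefficients are re-randomized infinitely often.

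\textbf{Short-time part.} On each window $[k\tau,(k+1)\tau)$ the coefficients are frozen, so the system is exactly of the type treated in Theorem~\ref{thm1}. Let $G_k$ be the event that the matrices drawn on that window satisfy the concentration/spectral bounds of Section~\ref{sec_prob}; as these concern a single Bernoulli sample, $P(G_k)=P(G_0)\to1$ as $N\to\infty$. On $G_k$, and while $\lambda(k\tau)\le\lambda_+(N)$, the $L^2$ estimate behind \eqref{thm1_2} read over one window gives
\begin{equation}\label{eq_prop_a}
\lambda((k+1)\tau)-\lambda_-(N)\le e^{-\mu\tau}\big(\lambda(k\tau)-\lambda_-(N)\big),
\end{equation}
which in particular propagates $\lambda(\cdot)\le\lambda_+(N)$; the $L^\infty$ estimate behind \eqref{thm1_3} gives a companion bound
\begin{equation}\label{eq_prop_b}
\tilde\lambda((k+1)\tau)\le e^{-\nu\tau}\,\tilde\lambda(k\tau)+C\big(\lambda(k\tau)+\eta(N)\big),
\end{equation}
with $\eta(N)\to0$ and $\nu=\nu(p,q)>0$ for \emph{every} $\tau>0$. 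The reason the factor in \eqref{eq_prop_b} is genuinely $<1$ (rather than some $Ce^{-\mu\tau}$ that could exceed $1$ when $\tau$ is small) is that the denominator $|\bar{\bx}-\bar{\by}|^2$ of $\tilde\lambda$ grows exponentially at rate $\approx4q$ --- in $\frac{d}{dt}|\bar{\bx}-\bar{\by}|^2=4\bar\psi^-|\bar{\bx}-\bar{\by}|^2+\cdots$ the correction is of lower order, using the concentration of the degrees $\frac{1}{N_1}\sum_i\psi^-_{i,j}$ around $q$ and the identities $\sum_i\hat{\bx}_i=\sum_j\hat{\by}_j=0$ --- and this outpaces the worst-case growth of the numerators $\max_i|\hat{\bx}_i|^2$, $\max_j|\hat{\by}_j|^2$ (at most $\approx2(q-p)$). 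Since $\tilde\lambda(0)\le\Lambda$ gives $\lambda(0)\le\Lambda\le\lambda_+(N)$ for large $N$ (recall $\lambda_+(N)\to\infty$), I would iterate \eqref{eq_prop_a}--\eqref{eq_prop_b} on $\bigcap_{k=0}^{K-1}G_k$, obtaining $\lambda(K\tau)\le\lambda_-(N)+e^{-\mu K\tau}\Lambda$ and $\tilde\lambda(K\tau)\le e^{-\nu K\tau}\Lambda+C\sum_{k=0}^{K-1}e^{-\nu(K-1-k)\tau}\big(\lambda(k\tau)+\eta(N)\big)$, and then choose first $K=K(\Lambda,\tau,p,q)$ large and then $N$ large so the right-hand side is $<1/4$. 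Putting $T=K\tau$ and $\bv=(\bar{\by}(T)-\bar{\bx}(T))/|\bar{\by}(T)-\bar{\bx}(T)|$ (the assumption $\tilde\lambda(0)\le\Lambda<\infty$ rules out the degenerate case that all $2N$ points coincide, so $|\bar{\bx}-\bar{\by}|>0$ throughout), the bound $\tilde\lambda(T)<1/4$ forces $\max_i|\hat{\bx}_i(T)|$ and $\max_j|\hat{\by}_j(T)|$ both below $\tfrac12|\bar{\bx}(T)-\bar{\by}(T)|$, which by a one-line computation makes the perpendicular bisector of the segment $[\bar{\bx}(T),\bar{\by}(T)]$ (normal $\bv$) strictly separate the two groups, i.e.\ \eqref{thm2_2} at $t=T$. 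Finally $\bigcap_{k=0}^{K-1}G_k$ is an intersection of $K=O(1)$ events of probability $\to1$, hence itself has probability $\to1$.

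\textbf{Long-time part.} Keep this $\bv$ and set $a_i(t)=\bx_i(t)\cdot\bv$, $b_j(t)=\by_j(t)\cdot\bv$. I claim that on any time interval on which $\max_i a_i<\min_j b_j$, the map $t\mapsto\max_i a_i(t)$ is non-increasing and $t\mapsto\min_j b_j(t)$ is non-decreasing. Indeed, at an index $i^*$ attaining the maximum,
\[
\dot{a}_{i^*}=\frac{1}{N_1}\sum_{i'\ne i^*}\psi^+_{i^*,i'}(a_{i'}-a_{i^*})-\frac{1}{N_2}\sum_j\psi^-_{i^*,j}(b_j-a_{i^*})\le 0,
\]
since $a_{i'}\le a_{i^*}$ with $\psi^+\ge0$ makes the first sum $\le0$, and $b_j\ge\min_j b_j>a_{i^*}$ with $\psi^-\ge0$ makes the second $\ge0$; feeding this into the upper Dini derivative of the Lipschitz function $\max_i a_i$ gives the monotonicity, and symmetrically for $\min_j b_j$. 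Hence $g(t):=\min_j b_j(t)-\max_i a_i(t)$ is non-decreasing wherever it is positive, so a maximal-interval argument starting from $g(T)>0$ yields $g(t)\ge g(T)>0$ for all $t\ge T$, which is \eqref{thm2_2} for all $t\ge T$. This step uses only $\psi^\pm\ge0$ and continuity of the trajectory in $t$, so it needs no probabilistic control of the (infinitely many) later windows.

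\textbf{Expected main obstacle.} The genuine difficulty is structural: the conclusion must hold on the infinite horizon $[T,\infty)$, across which the coefficients are re-randomized infinitely often, whereas any ``good realization'' event is close to sure only on finitely many windows at once; the two-part split above is precisely what circumvents this. Inside the short-time part the main technical hurdle is to secure \eqref{eq_prop_b} with a per-window factor $<1$ valid for \emph{every} $\tau>0$: this is what forces one to run the $L^2$ iteration \eqref{eq_prop_a} in parallel --- so the source term $\lambda(k\tau)$ in \eqref{eq_prop_b} is itself decaying --- and to keep track of the exponential growth of $|\bar{\bx}-\bar{\by}|^2$, a growth the proof of \eqref{thm1_3} could afford to discard but this iteration cannot.
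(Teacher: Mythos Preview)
Your two-part strategy---probabilistic control on finitely many windows to reach separation at $T=K\tau$, then a deterministic monotonicity argument for $t\ge T$---matches the paper's proof exactly, including the choice $\bv=(\bar{\by}(T)-\bar{\bx}(T))/|\bar{\by}(T)-\bar{\bx}(T)|$ and the propagation of separation via positivity of the projected coordinates (the paper phrases the latter as ``the ODE preserves positivity of $x_i:=-(\bx_i\cdot\bv-c)$ and $y_j:=\by_j\cdot\bv-c$,'' which is equivalent to your Dini-derivative argument for $\max_i a_i$ and $\min_j b_j$).

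The one substantive difference is in how you execute the short-time part. You iterate the $L^2$ and $L^\infty$ estimates of Theorem~\ref{thm1} window-by-window, which forces you to worry about whether \eqref{eq_prop_b} gives a genuine per-window contraction for \emph{every} $\tau>0$ (your ``expected main obstacle''). The paper sidesteps this entirely: the proof of Theorem~\ref{thm1} never uses that the coefficients are constant in $t$---only that they satisfy the spectral and concentration bounds \eqref{psi_cond} at each instant (this is the content of Remark~\ref{rem_proof1}). Since \eqref{psi_cond} is a pointwise condition and holds with high probability on each of the $K$ windows simultaneously, Theorem~\ref{thm1} applies \emph{directly} to the piecewise-constant system on all of $[0,K\tau]$, yielding \eqref{thm1_3} at $t=K\tau$ in one shot. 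One then chooses $K$ large (independent of $N$) and $N$ large so that the right side of \eqref{thm1_3} is below $1/16$. Your iteration is not wrong and your proposed fix (tracking the exponential growth of $|\bar{\bx}-\bar{\by}|^2$ to absorb the multiplicative constants) would work, but it replaces a two-line reduction with a delicate bookkeeping of constants; the paper's route is cleaner and dissolves your main obstacle rather than confronting it.
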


This theorem gives the $L^\infty$ separation under slightly stronger assumption \eqref{thm2_1} on the initial data, compared to \eqref{thm1_1}. To prove this theorem, we basically apply Theorem \ref{thm1} to \eqref{eq} with \eqref{rand2} on the time interval $[0,K\tau]$ for some fixed large $K$ (the legitimacy of this application will be justified in the proof). Then the $L^\infty$ separation result \eqref{thm1_3} will imply \eqref{thm2_2} at time $T=K\tau$. Then a simple geometric argument shows that \eqref{thm2_2} will hold for all $t>T$ once it holds for $t=T$.

With the parameters given by \eqref{rand2}, we do not expect quantitative separation results like \eqref{thm1_2} or \eqref{thm1_3} to hold for all large time. In fact, there is always a positive probability such that the `good properties' of coefficient matrices do not hold on a long time interval $[K_1\tau,K_2\tau]$, with $(K_2-K_1)$ being arbitrarily large. Quantitative estimates like \eqref{thm1_2} or \eqref{thm1_3} may break down on such time intervals since they originate from a bi-stable ODE argument (see Lemma \ref{lem_ode}).

%%%%%%%%%%%%%%%%%%%%%%%%%%%%%%%%%%%%%%%
\section{Probabilistic descriptions of coefficient matrices}\label{sec_prob}
%%%%%%%%%%%%%%%%%%%%%%%%%%%%%%%%%%%%%%%

Before proceeding to the proof, we first need some good properties of typical coefficient matrices given randomly by Bernoulli distributions. For simplicity of notation, we will consider the case of \eqref{rand1}, and clearly the same results apply to the case of \eqref{rand2} for each fixed $k$.

\subsection{Description of $\{\psi^+_{i,i'}\}$ and $\{\psi^+_{j,j'}\}$}
For the alignment coefficient matrices $\{\psi^+_{i,i'}\}$ and $\{\psi^+_{j,j'}\}$, we need to control the $L^2$ contraction rate of the attraction part. We define the {\it Fiedler number} $\texttt{F}_1(\psi^{+})$ of $\{\psi^{+}_{i,i'}\}$ as the second smallest eigenvalue of the matrix $A=(a_{i,i'})$ given as\footnote{Notice that 0 is the smallest eigenvalue of $A$ with eigenvector $(1,\dots,1)^T$.}
\begin{equation}\label{A}
a_{i,i'}=\left\{\begin{split}
& -\frac{1}{N_1}\psi^{+}_{i,i'},\quad i\ne i', \\
& \frac{1}{N_1}\sum_{i''\ne i} \psi^{+}_{i,i''},\quad i=i',
\end{split}\right.
\end{equation}
and similarly define $\texttt{F}_2(\psi^{+})$ for $\{\psi^{+}_{j,j'}\}$.

The following lemma of Juhasz~\cite{Juh} says that the Fiedler numbers of $\{\psi^+_{i,i'}\}$ and $\{\psi^+_{j,j'}\}$ are close to $p$ with large probability, for large $N$.
\begin{lemma}[Theorem 2 of~\cite{Juh}]\label{lem_psi1}
Let $\{\psi^{+}_{i,i'}\}$ and $\{\psi^{+}_{j,j'}\}$ be given randomly by \eqref{rand1_1}. For any $\delta>0$ and $\epsilon>0$, there exists $N_0=N_0(p,\delta,\epsilon)$, such that for any $N= \min\{N_1,N_2\}>N_0$,
\begin{equation}
P\Big(|\texttt{F}_1(\psi^{+})-p|>\delta \textnormal{ or } |\texttt{F}_2(\psi^{+})-p|>\delta\Big) < \epsilon.
\end{equation}
\end{lemma}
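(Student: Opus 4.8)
The plan is to recognize $A$ as a rescaling of the combinatorial graph Laplacian of the Erd\H{o}s--R\'enyi random graph $G(N_1,p)$, compute the spectrum of its expectation exactly, and then transfer this to $A$ itself by a spectral concentration argument. Write $A=\frac{1}{N_1}(D-\Psi)$, where $\Psi=(\psi^+_{i,i'})$ is the symmetric, zero-diagonal adjacency matrix and $D=\mathrm{diag}\big(\sum_{i''\ne i}\psi^+_{i,i''}\big)$ is the degree matrix. Taking expectations, $\mathbb{E}[\Psi]=p(J-I)$ and $\mathbb{E}[D]=p(N_1-1)I$, where $J$ is the all-ones matrix, so $\mathbb{E}[A]=\frac{1}{N_1}\big(p(N_1-1)I-p(J-I)\big)=p\big(I-\frac{1}{N_1}J\big)$. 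The eigenvalues of $I-\frac{1}{N_1}J$ are $0$ (eigenvector $(1,\dots,1)^T$) and $1$ with multiplicity $N_1-1$; hence $\mathbb{E}[A]$ has eigenvalues $0$ and $p$ (multiplicity $N_1-1$), and its second smallest eigenvalue --- its Fiedler number --- equals $p$ \emph{exactly}.

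Next I would reduce the claim to an operator-norm estimate. Since $\psi^+_{i,i'}=\psi^+_{i',i}$, the matrix $A$ is symmetric, so Weyl's inequality gives $|\texttt{F}_1(\psi^+)-p|=|\lambda_2(A)-\lambda_2(\mathbb{E}[A])|\le \|A-\mathbb{E}[A]\|$, where $\lambda_2$ is the second smallest eigenvalue and $\|\cdot\|$ the spectral norm. Thus it suffices to show $\|A-\mathbb{E}[A]\|\le\delta$ with probability at least $1-\epsilon$ once $N_1$ is large. Decompose $A-\mathbb{E}[A]=\frac{1}{N_1}\big((D-\mathbb{E}D)-(\Psi-\mathbb{E}\Psi)\big)$. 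For the degree part, each $D_{ii}-\mathbb{E}D_{ii}$ is a centered sum of $N_1-1$ independent Bernoulli$(p)$ variables, so Hoeffding's inequality together with a union bound over $i$ yields $\|D-\mathbb{E}D\|=\max_i|D_{ii}-\mathbb{E}D_{ii}|\le C\sqrt{N_1\log N_1}$ with probability $1-o(1)$. For the adjacency part, $\Psi-\mathbb{E}\Psi$ is a symmetric matrix with independent (up to symmetry) mean-zero entries bounded by $1$ and variance $p(1-p)$, so a standard random-matrix bound (the trace/moment method of F\"uredi--Koml\'os, or matrix Bernstein) gives $\|\Psi-\mathbb{E}\Psi\|\le C\sqrt{N_1}$, in fact $(2+o(1))\sqrt{p(1-p)N_1}$, with probability $1-o(1)$. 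Combining, $\|A-\mathbb{E}[A]\|\le C\sqrt{(\log N_1)/N_1}\to0$, which is below any fixed $\delta$ once $N_1$ exceeds some $N_0(p,\delta,\epsilon)$. Finally, run the identical argument for $\{\psi^+_{j,j'}\}$ with $N_2$ in place of $N_1$ and take a union bound over the two exceptional events; since $N_1,N_2\ge N>N_0$, each has probability $<\epsilon/2$.

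The genuinely nontrivial step is the operator-norm bound on the centered adjacency matrix $\Psi-\mathbb{E}\Psi$; everything else is linear algebra and scalar concentration. Obtaining the sharp leading constant $2\sqrt{p(1-p)}$ (as in Juh\'asz's theorem) requires some care --- a high-moment trace computation or a refined $\epsilon$-net argument --- but for the qualitative statement of the lemma any bound of order $o(N_1)$ on $\|\Psi-\mathbb{E}\Psi\|$ is enough, so even a crude matrix-concentration inequality closes the argument.
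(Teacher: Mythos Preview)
Your argument is correct. The paper, however, does not supply its own proof of this lemma: it is simply quoted as Theorem~2 of Juh\'asz~\cite{Juh} and used as a black box. What you have written is therefore not a reproduction of the paper's reasoning but a self-contained proof via modern tools. Your route---compute $\mathbb{E}[A]=p\big(I-\tfrac{1}{N_1}J\big)$ exactly, reduce $|\texttt{F}_1(\psi^{+})-p|$ to $\|A-\mathbb{E}[A]\|$ by Weyl's inequality, and control the latter through a diagonal/off-diagonal split handled by Hoeffding plus a F\"uredi--Koml\'os (or matrix-Bernstein) bound---is a clean contemporary approach; the original 1981 argument works instead by moment estimates on the adjacency spectrum. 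Either method yields the qualitative conclusion $\texttt{F}_1(\psi^{+})\to p$ in probability, and the quantitative rate $O\big(\sqrt{(\log N_1)/N_1}\big)$ you obtain is more than enough for how the lemma is used downstream in \eqref{psi_cond}.
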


\subsection{Description of $\{\psi^-_{i,j}\}$}
For the anti-alignment coefficient matrix $\{\psi^-_{i,j}\}$, we first introduce the following notations: Define the 'row (column) mean' of $\{\psi^{-}_{i,j}\}$
\begin{equation}
\Psi^{-}_i:=\frac{1}{N_2}\sum_j \psi^{-}_{i,j},\quad \Psi^{-}_j := \frac{1}{N_1}\sum_i \psi^{-}_{i,j},
\end{equation}
and the overall mean
\begin{equation}
\bar{\Psi}^{-} := \frac{1}{N_1N_2} \sum_{i,j} \psi^{-}_{i,j} = \frac{1}{N_1}\sum_i \Psi^{-}_i= \frac{1}{N_2}\sum_j \Psi^{-}_j.
\end{equation}

Define the 'maximal row (column) deviation'
\begin{equation}
\hat{\Psi}^{-}_i = \Psi^{-}_i - \bar{\Psi}^{-},\quad \hat{\Psi}^{-}_j = \Psi^{-}_j - \bar{\Psi}^{-},\quad D(\Psi^{-}) = \max\{\max_i |\hat{\Psi}^{-}_i|, \max_j |\hat{\Psi}^{-}_j|\}.
\end{equation}

Then we have the following lemma, which says that the row (column) mean is almost constant with large probability, for large $N$.

\begin{lemma}\label{lem_psi2}
 Assume \eqref{kappa} is satisfied for some $\kappa>0$. Let $\{\psi^{-}_{i,j}\}$ be given randomly by \eqref{rand1_2}. For any $0<\alpha<1$ and $\epsilon>0$, there exists $N_0=N_0(q,\kappa,\alpha,\epsilon)$, such that for any $N=\min\{N_1,N_2\}>N_0$,
\begin{equation}
P\left(|\Psi^{-}_i-q| \ge \frac{1}{N^{(1-\alpha)/2}} \textnormal{ for some }i\right) < \epsilon.
\end{equation}
\end{lemma}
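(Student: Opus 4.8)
The plan is a textbook concentration-plus-union-bound argument. For each fixed index $i$, the quantity $\Psi^{-}_i = \frac{1}{N_2}\sum_j \psi^{-}_{i,j}$ is the empirical average of the $N_2$ independent Bernoulli random variables $\psi^{-}_{i,1},\dots,\psi^{-}_{i,N_2}\sim B(q)$, so $\mathbb{E}[\Psi^{-}_i]=q$. Since each $\psi^{-}_{i,j}$ takes values in $[0,1]$, Hoeffding's inequality gives, for every $t>0$,
\begin{equation}
P\big(|\Psi^{-}_i - q| \ge t\big) \le 2\exp(-2 N_2 t^2).
\end{equation}

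First I would specialize to $t = N^{-(1-\alpha)/2}$ and use $N_2 \ge N = \min\{N_1,N_2\}$ to obtain $N_2 t^2 \ge N^{\alpha}$, hence $P(|\Psi^{-}_i - q| \ge t) \le 2e^{-2N^{\alpha}}$ for each individual $i$. Next I would take a union bound over $i=1,\dots,N_1$; this is precisely where the comparability assumption \eqref{kappa} is used, to control the number of terms by $N_1 \le \kappa N$ (indeed either $N_1 = N$, or $N_1 \le \kappa N_2 = \kappa N$). This yields
\begin{equation}
P\Big(|\Psi^{-}_i - q| \ge N^{-(1-\alpha)/2} \textnormal{ for some }i\Big) \le 2\kappa N\, e^{-2N^{\alpha}}.
\end{equation}
Since $\alpha>0$, the right-hand side tends to $0$ as $N\to\infty$, so for any $\epsilon>0$ one may pick $N_0=N_0(q,\kappa,\alpha,\epsilon)$ making it smaller than $\epsilon$, which is exactly the assertion. (The analogous bound for $\Psi^{-}_j$, and hence for $D(\Psi^{-})$ up to a factor $2$, follows by the same argument with the roles of the two groups interchanged, should it be needed later.)

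I do not expect any genuine obstacle here: the estimate is in fact quite loose — Hoeffding discards the $q(1-q)$ variance factor, and a Bernstein/Chernoff bound would give a sharper constant — and the sole bookkeeping point deserving attention is that the polynomial prefactor $N_1$ coming from the union bound must be absorbed by the stretched-exponential tail. This is the reason the threshold is taken at scale $N^{-(1-\alpha)/2}$ with $\alpha$ \emph{strictly} positive rather than at the critical scale $N^{-1/2}$, at which the union bound would only produce an $O(1)$ (or worse) estimate. An alternative, self-contained route avoiding any named inequality is to bound the moment generating function of $\sum_j(\psi^{-}_{i,j}-q)$ directly and optimize over the Chernoff parameter; this gives the same conclusion with the same dependence of $N_0$ on the parameters.
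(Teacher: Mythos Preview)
Your argument is correct and follows the same high-level strategy as the paper: a concentration estimate for a single $\Psi^{-}_i$, followed by a union bound over $i=1,\dots,N_1$ (the paper phrases the latter via independence of the $\Psi^{-}_i$, but the inequality $1-\prod_i(1-p_i)\le \sum_i p_i$ they use is just the union bound). The difference is in how the single-$i$ tail bound is obtained: you invoke Hoeffding's inequality as a black box, whereas the paper derives it from scratch by bounding the binomial tail with Stirling-type estimates and a Taylor expansion, arriving at an exponent $-N_2^{\alpha}/(4q(1-q))$ in place of your $-2N^{\alpha}$. Their route recovers the variance factor $q(1-q)$ (closer to Chernoff/Bernstein), but since this sharper constant is never used, your Hoeffding-based argument is strictly simpler and equally sufficient for the lemma. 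Your remark that the strict positivity of $\alpha$ is exactly what allows the polynomial prefactor from the union bound to be absorbed is also the key point in the paper's computation.
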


This estimate is expected in view of the central limit theorem, since $\Psi^-_i$, as an average of $\{\psi^{-}_{i,j}\}_{j=1}^{N_2}$, approximately behaves like a normal distribution with mean $q$ and variance $O(1/N_2)$, and thus a deviation larger than $O(1/\sqrt{N_2})$ should have small probability.

\begin{proof}[Proof of Lemma \ref{lem_psi2}]

For a fixed $i$, $\{\psi^{-}_{i,j}\}_{j=1}^{N_2}$ are a set of $N_2$ i.i.d. random variables with Bernoulli distribution $B(q)$. Therefore for any $z>0$ with $zN_2$ being an integer,
\begin{equation}\begin{split}
P(\Psi^-_i \ge z) \le \sum_{n=zN_2}^{N_2} a_n,\quad a_n:={N_2 \choose n}q^n(1-q)^{N_2-n}.
\end{split}\end{equation}
Notice that
\begin{equation}
\frac{a_{n+1}}{a_n} = \frac{(N_2-n)q}{(n+1)(1-q)} \le 1,\quad \text{ if } n\ge qN_2,
\end{equation}
therefore for $z\ge q$, using the estimate $n^ne^{-n+1} \le n! \le (n+1)^{n+1}e^{-n}$,
\begin{equation}\begin{split}
P(\Psi^-_i \ge z) \le & N_2 a_{zN_2} = \frac{N_2\cdot N_2!}{(zN_2)!((1-z)N_2)!}q^{zN_2}(1-q)^{(1-z)N_2}\\
\le & \frac{N_2\cdot (N_2+1)^{N_2+1}e^{-N_2}}{(zN_2)^{zN_2}e^{-zN_2+1}\cdot ((1-z)N_2)^{(1-z)N_2}e^{-(1-z)N_2+1}}q^{zN_2}(1-q)^{(1-z)N_2} \\
= & \frac{N_2(N_2+1)}{e^2} \left[\Big(\frac{(N_2+1)q}{zN_2}\Big)^z\Big(\frac{(N_2+1)(1-q)}{(1-z)N_2}\Big)^{1-z}\right]^{N_2} \\
\le & \frac{N_2(N_2+1)}{e} \left[\Big(\frac{q}{z}\Big)^z\Big(\frac{1-q}{1-z}\Big)^{1-z}\right]^{N_2} \\
= & \frac{N_2(N_2+1)}{e} \exp\Big[N_2\Big(z(\log q-\log z) + (1-z)(\log (1-q)-\log (1-z))\Big)\Big] .\\
\end{split}\end{equation}
Notice that by writing $z = q+\delta$ and using Taylor expansion for small $\delta$,
\begin{equation}\begin{split}
& z(\log q-\log z) + (1-z)(\log (1-q)-\log (1-z)) \\
= & (q+\delta)\Big(\frac{-\delta}{q} + \frac{\delta^2}{2q^2}\Big) + (1-q-\delta)\Big(\frac{\delta}{1-q} + \frac{\delta^2}{2(1-q)^2}\Big) + O_q(\delta^3)
= -\frac{\delta^2}{2q(1-q)} + O_q(\delta^3).
\end{split}\end{equation}
For $0<\alpha<1$ and $N_2$ large, we can take
\begin{equation}
\delta_1 = \frac{1}{N_2^{(1-\alpha)/2}},\quad \delta = \delta_1 - \frac{1}{N_2}\Big\{(q+\delta_1)N_2\Big\} \le \delta_1,
\end{equation}
where $\{\cdot\}$ means taking the decimal part, and guarantee that $zN_2$ is an integer, and $\delta \ge \frac{9}{10}\delta_1$ for large $N_2$. Then we conclude
\begin{equation}\begin{split}
P\left(\Psi^-_i \ge q + \frac{1}{N_2^{(1-\alpha)/2}}\right) \le P\left(\Psi^-_i \ge q + \delta\right) \le  \frac{N_2(N_2+1)}{e}  \exp\Big(-\frac{N_2^\alpha}{4q(1-q)} \Big),\\
\end{split}\end{equation}
for large $N_2$. By similar argument for the event $\Psi^-_i \ge q - \frac{1}{N_2^{(1-\alpha)/2}}$, we finally obtain
\begin{equation}\begin{split}
P\left(|\Psi^-_i - q| \ge \frac{1}{N_2^{(1-\alpha)/2}}\right) \le  N_2^2  \exp\Big(-\frac{N_2^\alpha}{4q(1-q)} \Big),\\
\end{split}\end{equation}
for large $N_2$.

Since $\Psi^{-}_i$ are independent for different $i$, we have
\begin{equation}\begin{split}
& P\left( |\Psi^{-}_i-q| \ge \frac{1}{N^{(1-\alpha)/2}}\textnormal{ for some }i\in \{1,\dots,N_1\}\right) \\
= & 1 - \prod_{i=1}^{N_1}P\left( |\Psi^{-}_i-q| < \frac{1}{N^{(1-\alpha)/2}}\right) \\
\le &  1-\left(1-N_2^2  \exp\Big(-\frac{N_2^\alpha}{4q(1-q)} \Big)\right)^{N_1} \\
 \le & N_1N_2^2  \exp\Big(-\frac{N_2^\alpha}{4q(1-q)} \Big) \le \kappa N_2^3 \exp\Big(-\frac{N_2^\alpha}{4q(1-q)} \Big),
\end{split}\end{equation}
and the conclusion follows since the last quantity converges to 0 as $N=\min\{N_1,N_2\}\rightarrow \infty$.

\end{proof}

%\begin{itemize}
%\item $\texttt{F}_1(\psi^+)$ and $\texttt{F}_2(\psi^+)$, the Fiedler numbers of the matrices $\{\frac{1}{N_1}\psi_{i,i'}\}$ and $\{\frac{1}{N_2}\psi_{j,j'}\}$, are close to $p$, see [Juhasz 91'].
%\item $\bar{\Psi}^-$ is close to $q$, and $D(\Psi^-)\le O(1/\sqrt{N})$ (up to a $\log N$ factor when taking maximum), by central limit theorem.
%\end{itemize}
\begin{remark}
We can define similar notations for $\psi^{+}_{i,i'}$:
\begin{equation}
\Psi^{+}_i := \frac{1}{N_1}\sum_{i'\ne i} \psi^{+}_{i,i'},\quad \bar{\Psi}^{+} = \frac{1}{N_1}\sum_i \Psi^{+}_i,
\end{equation}
and
\begin{equation}
\hat{\Psi}^{+}_i = \Psi^{+}_i - \bar{\Psi}^{+},\quad D(\Psi^{+}) = \max_i |\hat{\Psi}^{+}_i|,
\end{equation}
and then the application of Lemma \ref{lem_psi2} to $\{\psi^{+}_{i,i'}\}$ with \eqref{rand2} gives 
\begin{equation}
P\left(|\Psi^{+}_i-p| > \frac{1}{N^{(1-\alpha)/2}} \textnormal{ for some }i\right) < \epsilon,
\end{equation}
for $N> N_0(p,\alpha,\epsilon)$.
\end{remark}

%%%%%%%%%%%%%%%%%%%%%%%%%%%%%%%%%%%%%%%
\section{Proof of Theorems \ref{thm1} and \ref{thm2}}
%%%%%%%%%%%%%%%%%%%%%%%%%%%%%%%%%%%%%%%

In this section we prove Theorems \ref{thm1} and \ref{thm2}. We first prove a lemma which will be used in the proof of Theorem \ref{thm1}.

\begin{lemma}\label{lem_ode}
Let $f(t),g(t)$ be positive functions satisfying the inequalities
\begin{equation}\begin{split}
& \dot{f} \ge A_{11}f - A_{12}g, \\
& \dot{g} \le A_{21}f - A_{22}g, \\
\end{split}\end{equation}
with $A_{12},A_{21}>0$. If the coefficients satisfy
\begin{equation}\label{lem1_1}
\Delta := (A_{11}+A_{22})^2-4A_{21}A_{12}>0,
\end{equation}
and initially 
\begin{equation}
\frac{g(0)}{f(0)} <\lambda_+,\quad \lambda_+:=  \frac{(A_{11}+A_{22}) +  \sqrt{\Delta}}{2A_{12}} ,
\end{equation}
then
\begin{equation}
\frac{g(t)}{f(t)} \le \lambda_- + \frac{g(0)}{f(0)}e^{-\mu t},\quad \lambda_-:= \frac{2A_{21}}{(A_{11}+A_{22}) + \sqrt{\Delta}},\quad \mu:=A_{12}(\lambda_+-\frac{g(0)}{f(0)}).
\end{equation}
\end{lemma}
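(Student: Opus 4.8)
The plan is to reduce everything to a scalar Riccati differential inequality for the ratio $\lambda(t):=g(t)/f(t)$, which is well defined and positive because $f,g>0$. First I would differentiate by the quotient rule, $\dot\lambda=(\dot g\,f-g\,\dot f)/f^{2}$, and then plug in the two hypotheses with the right signs: multiplying $\dot g\le A_{21}f-A_{22}g$ by $f>0$, multiplying $\dot f\ge A_{11}f-A_{12}g$ by $-g<0$, and adding, gives $\dot g\,f-g\,\dot f\le A_{21}f^{2}-(A_{11}+A_{22})gf+A_{12}g^{2}$; dividing by $f^{2}$ yields
\begin{equation}
\dot\lambda\le Q(\lambda),\qquad Q(x):=A_{12}x^{2}-(A_{11}+A_{22})x+A_{21}.
\end{equation}
Under the hypothesis \eqref{lem1_1} the polynomial $Q$ has two distinct real roots, which are precisely $\lambda_{\pm}$ from the statement; the two displayed formulas for $\lambda_-$ coincide after rationalizing, using $(A_{11}+A_{22})^{2}-\Delta=4A_{12}A_{21}$. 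Since $A_{12}>0$ we may write $Q(x)=A_{12}(x-\lambda_-)(x-\lambda_+)$ with $\lambda_-<\lambda_+$, and since $0<g(0)/f(0)<\lambda_+$ we also get $\lambda_+>0$, hence $\lambda_->0$.

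Next I would invoke the standard comparison principle for scalar ODEs: as $Q$ is locally Lipschitz and $\lambda(0)<\lambda_+$, one has $\lambda(t)\le\bar\lambda(t)$ (on the interval where $f,g$ are defined), where $\bar\lambda$ solves $\dot{\bar\lambda}=Q(\bar\lambda)$ with $\bar\lambda(0)=\lambda(0)$; this autonomous scalar equation stays in a bounded set and so is defined for all $t\ge 0$. It then remains to bound $\bar\lambda$. Setting $w:=\bar\lambda-\lambda_-$ and $d:=\lambda_+-\lambda_-=\sqrt{\Delta}/A_{12}>0$, we have $\dot w=A_{12}\,w(w-d)$ with $w(0)=\lambda(0)-\lambda_-<d$. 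A one-dimensional phase-line analysis finishes the argument: if $w(0)\le 0$ then $w(t)\le 0$ for all $t$, so $\bar\lambda(t)\le\lambda_-\le\lambda_-+\tfrac{g(0)}{f(0)}e^{-\mu t}$ and we are done; if $0<w(0)<d$, then $\dot w<0$ on $(0,d)$, so $w$ is decreasing and stays in $(0,w(0)]$, whence $w-d\le w(0)-d$ and
\begin{equation}
\dot w\le A_{12}\big(w(0)-d\big)\,w=-\mu\,w,\qquad \mu=A_{12}\big(d-w(0)\big)=A_{12}\Big(\lambda_+-\tfrac{g(0)}{f(0)}\Big)>0.
\end{equation}
Grönwall gives $w(t)\le w(0)e^{-\mu t}$, and since $\lambda_->0$ we have $w(0)=\lambda(0)-\lambda_-\le\lambda(0)=g(0)/f(0)$, so $\lambda(t)\le\bar\lambda(t)=\lambda_-+w(t)\le\lambda_-+\tfrac{g(0)}{f(0)}e^{-\mu t}$, which is the claim.

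The computation is short, so there is no single hard step, but the points that need care are: (i) getting the signs exactly right when multiplying the two differential inequalities by $f$ and by $-g$ to produce the Riccati inequality for $\lambda$, and (ii) making sure $\lambda$ and $\bar\lambda$ are defined on a common interval so that the comparison principle legitimately applies there — once that is in place, the rest is the elementary phase-line observation that $w$ is monotone on $(0,d)$ plus a linear Grönwall estimate with the "frozen" coefficient $w(0)-d$.
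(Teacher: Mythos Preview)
Your proof is correct and follows essentially the same route as the paper's: derive the Riccati inequality $\dot\lambda\le A_{12}(\lambda-\lambda_-)(\lambda-\lambda_+)$ for $\lambda=g/f$, observe that $\lambda$ cannot exceed $\lambda(0)$ when $\lambda(0)<\lambda_+$, and then compare with the linear ODE $\dot\lambda=-\mu(\lambda-\lambda_-)$. The paper does this in three lines by working directly with $\lambda$, whereas you insert an intermediate comparison with the exact Riccati solution $\bar\lambda$ and a phase-line analysis of $w=\bar\lambda-\lambda_-$; this is more detailed but not a different idea.
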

\begin{proof}
Let $\lambda(t) =g(t)/f(t)$, and it satisfies
\begin{equation}
\dot{\lambda } = \frac{1}{f}\dot{g} - \frac{g}{f^2}\dot{f} \le A_{12}\lambda ^2 - (A_{11}+A_{22}) \lambda  + A_{21} = A_{12}(\lambda -\lambda_-)(\lambda -\lambda_+).
\end{equation}
If $\lambda (0)<\lambda_+$ as assumed, then $\dot{\lambda }<0$ as long as $\lambda_-<\lambda (t)<\lambda_+$, and thus $\lambda (t)$ cannot go above $\lambda (0)$. Therefore the conclusion follows from a comparison with the linear ODE $\dot{\lambda } = -\mu(\lambda -\lambda_-)$.
\end{proof}

\begin{proof}[Proof of Theorem \ref{thm1}]

Fix $\alpha>0$ small. Lemmas \ref{lem_psi1} and \ref{lem_psi2} imply that, if $N$ is large, then with large probability, there holds
\begin{equation}\label{psi_cond}\begin{split}
& \texttt{F}(\psi^{+}) := \min\{\texttt{F}_1(\psi^{+}), \texttt{F}_2(\psi^{+}) \} \ge p-\frac{p}{12}, \\
& |\bar{\Psi}^{-} - q| \le \min\left\{\frac{q}{24},\frac{p}{24}\right\}, \quad D(\Psi^{-}) \le \min\left\{\frac{1}{N^{(1-\alpha)/2}}, \frac{p}{24}\right\} ,\\
& |\bar{\Psi}^{+} - p| \le \frac{p}{24}, \quad D(\Psi^{+}) \le  \min\left\{\frac{1}{N^{(1-\alpha)/2}}, \frac{p}{24}\right\}. \\
\end{split}\end{equation}
Therefore it suffices to prove the large time behavior \eqref{thm1_2} and \eqref{thm1_3} for coefficient matrices satisfying \eqref{psi_cond}.

%There holds the total momentum conservation $\frac{\rd}{\rd{t}}(\bar{\bx}+\bar{\by}) = 0$. Therefore without loss of generality, we may assume $\bar{\by}=-\bar{\bx}$ holds for all time. 

{\bf STEP 1}: $L^2$ estimate for $|\bar{\bx}-\bar{\by}|^2$.

The time evolution of $\bar{\bx}$ is given by
\begin{equation}\label{dbarx}\begin{split}
\dot{\bar{\bx}} = &  - \frac{1}{N_1N_2}\sum_{i,j} \psi^-_{i,j} (\by_j-\bx_i) = -\frac{1}{N_2}\sum_j \Psi^-_j \by_j + \frac{1}{N_1}\sum_i \Psi^-_i \bx_i \\ 
= & \bar{\Psi}^-(\bar{\bx}-\bar{\by}) + \frac{1}{N_1}\sum_i \hat{\Psi}^-_i \hat{\bx}_i-\frac{1}{N_2}\sum_j \hat{\Psi}^-_j \hat{\by}_j,
\end{split}\end{equation}
where the symmetry of $\psi^+_{i,i'}$ is used in the first equality. Therefore by subtracting its counterpart for $\bar{\by}$ and conducting energy estimate,
\begin{equation}\label{exp1}\begin{split}
\frac{1}{2}\frac{\rd}{\rd{t}}|\bar{\bx}-\bar{\by}|^2 = &  2 \bar{\Psi}^-|\bar{\bx}-\bar{\by}|^2 + 2(\bar{\bx}-\bar{\by})\cdot\left(\frac{1}{N_1}\sum_i \hat{\Psi}^-_i \hat{\bx}_i-\frac{1}{N_2}\sum_j \hat{\Psi}^-_j \hat{\by}_j\right)\\
\ge &  (2\bar{\Psi}^- - c_1)|\bar{\bx}-\bar{\by}|^2 - \frac{1}{c_1}\left|\frac{1}{N_1}\sum_i \hat{\Psi}^-_i \hat{\bx}_i-\frac{1}{N_2}\sum_j \hat{\Psi}^-_j \hat{\by}_j\right|^2\\
\ge &  (2\bar{\Psi}^- - c_1)|\bar{\bx}-\bar{\by}|^2 - \frac{2D(\Psi^-)^2}{c_1}\left(\frac{1}{N_1}\sum_i  |\hat{\bx}_i|^2+\frac{1}{N_2}\sum_j  |\hat{\by}_j|^2\right)\\
= &  (2\bar{\Psi}^- - c_1)|\bar{\bx}-\bar{\by}|^2 - \frac{2D(\Psi^-)^2}{c_1}(\var(\bx)+\var(\by)),\\
\end{split}\end{equation}
where we used Cauchy-Schwarz in the first inequality, with $c_1>0$ to be chosen. This gives an exponential growth estimate for $|\bar{\bx}-\bar{\by}|^2$, up to an error term of $(\var(\bx)+\var(\by))$ with small coefficient.

{\bf STEP 2}: $L^2$ estimate for $(\var(\bx)+\var(\by))$.

Subtracting \eqref{dbarx} from the first equation of \eqref{eq} gives
\begin{equation}\label{dhatx}\begin{split}
\dot{\hat{\bx}}_i = & \frac{1}{N_1}\sum_{i'\ne i} \psi^+_{i,i'} (\bx_{i'}-\bx_i) - \frac{1}{N_2}\sum_{j} \psi^-_{i,j} (\by_j-\bx_i) -\dot{\bar{\bx}} \\
%= & \frac{1}{N_1}\sum_{i'\ne i} \psi^+_{i,i'} (\hat{\bx}_{i'}-\hat{\bx}_i) - \frac{1}{N_2}\sum_{j} \psi^-_{i,j} (\hat{\by}_j-\hat{\bx}_i) - \frac{1}{N_2}\left(\sum_{j}\psi^-_{i,j}\right) (\bar{\by}-\bar{\bx}) -\dot{\bar{\bx}} \\
= & \frac{1}{N_1}\sum_{i'\ne i} \psi^+_{i,i'} (\hat{\bx}_{i'}-\hat{\bx}_i) - \frac{1}{N_2}\sum_{j} \psi^-_{i,j} (\hat{\by}_j-\hat{\bx}_i) - \Psi^-_i (\bar{\by}-\bar{\bx}) -\dot{\bar{\bx}}. \\
\end{split}\end{equation}
Then 
\begin{equation}\label{dvar}\begin{split}
\frac{\rd}{\rd{t}}\left(\frac{1}{2N_1}\sum_i|\hat{\bx}_i|^2\right) = & \frac{1}{N_1^2}\sum_{i,i': i'\ne i} \psi^+_{i,i'} (\hat{\bx}_{i'}-\hat{\bx}_i)\cdot \hat{\bx}_i - \frac{1}{N_1N_2}\sum_{i,j} \psi^-_{i,j} (\hat{\by}_j-\hat{\bx}_i)\cdot \hat{\bx}_i \\ & - \frac{1}{N_1}\sum_i \hat{\Psi}^-_i (\bar{\by}-\bar{\bx})\cdot \hat{\bx}_i  - (\bar{\Psi}^- (\bar{\by}-\bar{\bx}) +\dot{\bar{\bx}})\cdot \frac{1}{N_1}\sum_i \hat{\bx}_i\\
= & -\frac{1}{2N_1^2}\sum_{i,i': i'\ne i} \psi^+_{i,i'} |\hat{\bx}_{i'}-\hat{\bx}_i|^2 - \frac{1}{N_1N_2}\sum_{i,j} \psi^-_{i,j} (\hat{\by}_j-\hat{\bx}_i)\cdot \hat{\bx}_i \\
& - (\bar{\by}-\bar{\bx})\cdot\frac{1}{N_1}\sum_i \hat{\Psi}^-_i  \hat{\bx}_i  \\
= & -I + II + III,
\end{split}\end{equation}
where we used $\sum_i \hat{\bx}_i=0$.

Notice that $I$ can be written as
\begin{equation}\label{est1}\begin{split}
\frac{1}{2N_1^2}\sum_{i,i': i'\ne i} \psi^+_{i,i'} |\hat{\bx}_{i'}-\hat{\bx}_i|^2 = &  \frac{1}{N_1^2}\sum_{k=1}^d \left(\sum_i\Big(\sum_{i'\ne i}\psi^+_{i,i'}\Big) |\hat{x}_i^{(k)}|^2 - \sum_{i,i': i'\ne i} \psi^+_{i,i'} \hat{x}_{i'}^{(k)}\hat{x}_i^{(k)}\right) \\
= &  \frac{1}{N_1}\sum_{k=1}^d  (\hat{x}^{(k)})^T A \hat{x}^{(k)},\\
\end{split}\end{equation}
where $\hat{x}^{(k)} = (\hat{x}_1^{(k)},\dots,\hat{x}_{N_1}^{(k)})^T$ with $\hat{x}_i^{(k)}$ denoting the $k$-th component of $\hat{\bx}_i$, and $A$ is as given in \eqref{A}. Thus $I$ is bounded below by
\begin{equation}\label{good}
\frac{1}{2N_1^2}\sum_{i,i': i'\ne i} \psi^+_{i,i'} |\hat{\bx}_{i'}-\hat{\bx}_i|^2 \ge \texttt{F}_1(\psi^+)\frac{1}{N_1}\sum_i |\hat{\bx}_i|^2,
\end{equation}
since the vector $\hat{x}^{(k)}$ is orthogonal to the eigenvector $(1,\dots,1)^T$ of the symmetric matrix $A$ with the smallest eigenvalue 0.

$II$ is controlled by
\begin{equation}\label{est2}\begin{split}
& - \frac{1}{N_1N_2}\sum_{i,j} \psi^-_{i,j} (\hat{\by}_j-\hat{\bx}_i)\cdot \hat{\bx}_i \\
= & - \frac{1}{N_1N_2}\sum_{i,j} \psi^-_{i,j} \hat{\by}_j\cdot \hat{\bx}_i + \frac{1}{N_1}\sum_i\Psi^-_i|\hat{\bx}_i|^2 \\
\le &  \frac{1}{N_1N_2}\sum_{i,j} \psi^-_{i,j}(\frac{1}{2}|\hat{\by}_j|^2 + \frac{1}{2}|\hat{\bx}_i|^2) + (\bar{\Psi}^- + D(\Psi^-))\frac{1}{N_1}\sum_i|\hat{\bx}_i|^2 \\
= &  \frac{1}{N_2}\sum_{j} \Psi^-_{j}\frac{1}{2}|\hat{\by}_j|^2 + \frac{1}{N_1}\sum_{i} \Psi^-_{i}\frac{1}{2}|\hat{\bx}_i|^2 + (\bar{\Psi}^- + D(\Psi^-))\frac{1}{N_1}\sum_i|\hat{\bx}_i|^2 \\
= & (\bar{\Psi}^- + D(\Psi^-)) \left( \frac{3}{2}\frac{1}{N_1}\sum_i|\hat{\bx}_i|^2 + \frac{1}{2}\frac{1}{N_2}\sum_j|\hat{\by}_j|^2\right).
\end{split}\end{equation}

$III$ is controlled by
\begin{equation}\label{est3}\begin{split}
 - (\bar{\by}-\bar{\bx})\cdot\frac{1}{N_1}\sum_i \hat{\Psi}^-_i  \hat{\bx}_i \le & c|\bar{\bx}-\bar{\by}|^2 + \frac{1}{4c} \frac{1}{N_1}\sum_i |\hat{\Psi}^-_i|^2 |\hat{\bx}_i|^2 \le c|\bar{\bx}-\bar{\by}|^2 + \frac{ D(\Psi^-)^2}{4c} \frac{1}{N_1}\sum_i  |\hat{\bx}_i|^2,  \\
\end{split}\end{equation}
where $c>0$ is a constant to be chosen.

Using \eqref{est1}, \eqref{est2} and \eqref{est3} in \eqref{dvar}, we obtain
\begin{equation}\begin{split}
& \frac{\rd}{\rd{t}}\left(\frac{1}{2N_1}\sum_i|\hat{\bx}_i|^2\right)\\
 \le & -\texttt{F}_1(\psi^+)\frac{1}{N_1}\sum_i |\hat{\bx}_i|^2 + (\bar{\Psi}^- + D(\Psi^-)) \left( \frac{3}{2}\frac{1}{N_1}\sum_i|\hat{\bx}_i|^2 + \frac{1}{2}\frac{1}{N_2}\sum_j|\hat{\by}_j|^2\right) \\
& + c|\bar{\bx}-\bar{\by}|^2 + \frac{D(\Psi^-)^2}{4c} \frac{1}{N_1}\sum_i  |\hat{\bx}_i|^2 \\
\le & -\texttt{F}_1(\psi^+)\frac{1}{N_1}\sum_i |\hat{\bx}_i|^2 + (\bar{\Psi}^- +\frac{p}{6}+ D(\Psi^-)) \left( \frac{3}{2}\frac{1}{N_1}\sum_i|\hat{\bx}_i|^2 + \frac{1}{2}\frac{1}{N_2}\sum_j|\hat{\by}_j|^2\right) \\
& + \frac{D(\Psi^-)^2}{p}|\bar{\bx}-\bar{\by}|^2,
 \end{split}\end{equation}
by choose $c=D(\Psi^-)^2/p$. Similarly,
\begin{equation}\begin{split}
& \frac{\rd}{\rd{t}}\left(\frac{1}{2N_2}\sum_j|\hat{\by}_j|^2\right) \\
\le & -\texttt{F}_2(\psi^+)\frac{1}{N_2}\sum_j |\hat{\by}_j|^2 + (\bar{\Psi}^- +\frac{p}{6} + D(\Psi^-)) \left( \frac{1}{2}\frac{1}{N_1}\sum_i|\hat{\bx}_i|^2 + \frac{3}{2}\frac{1}{N_2}\sum_j|\hat{\by}_j|^2\right) \\
& + \frac{D(\Psi^-)^2}{p}|\bar{\bx}-\bar{\by}|^2 .
\end{split}\end{equation}
Summing them together, we obtain
\begin{equation}\label{exp2}\begin{split}
& \frac{1}{2}\frac{\rd}{\rd{t}}\left(\var(\bx)+\var(\by) \right) 
\le  -\left(\texttt{F}(\psi^+) - 2(\bar{\Psi}^-  +\frac{p}{6} + D(\Psi^-)) \right) \left( \var(\bx)+\var(\by)\right) + \frac{2D(\Psi^-)^2}{p}|\bar{\bx}-\bar{\by}|^2.
\end{split}\end{equation}
This gives an exponential decay estimate for $(\var(\bx)+\var(\by))$, up to an error term of $|\bar{\bx}-\bar{\by}|^2$ with small coefficient.

{\bf STEP 3}: ODE stability analysis to get $L^2$ separation.

Now we claim that \eqref{exp1} and \eqref{exp2}, with \eqref{psi_cond}, imply the $L^2$ separation result \eqref{thm1_2}. In fact, we apply Lemma \ref{lem_ode} with
\begin{equation}
f = |\bar{\bx}-\bar{\by}|^2,\quad g = \var(\bx)+\var(\by),
\end{equation}
and
\begin{equation}
A_{11} = 2(2\bar{\Psi}-c_1),\,\, A_{12} = \frac{4D(\Psi^-)^2}{c_1},\,\, A_{21} = \frac{4D(\Psi^-)^2}{p},\,\, A_{22} = 2\left(\texttt{F}(\psi^+) - 2(\bar{\Psi}^-  +\frac{p}{6} + D(\Psi^-)) \right).
\end{equation}
Using \eqref{psi_cond}, one checks that with the choice $c_1=p/24$,
\begin{equation}\begin{split}
\frac{\Delta}{4} = & \frac{1}{4}(A_{11}+A_{22})^2-A_{21}A_{12} 
=  \left(\texttt{F}(\psi^+) - c_1  -\frac{p}{3} - 2D(\Psi^-) \right)^2 - \frac{16D(\Psi^-)^3}{pc_1} \\
\ge & \left(p -\frac{p}{24}-\frac{p}{24}-\frac{p}{3}-\frac{p}{12} \right)^2 - \frac{1000}{p^2 N^{3(1-\alpha)/2}} 
\ge  \frac{p^2}{4} - \frac{1000}{p^2 N^{3(1-\alpha)/2}}, \\
\end{split}\end{equation}
which is clearly positive if $N$ is large enough. Therefore Lemma \ref{lem_ode} applies, and gives the large time behavior \eqref{thm1_2} under the initial assumption \eqref{thm1_1}, with $\lambda_{\pm}$ and $\mu$ given as in the lemma. 

Then we estimate the asymptotic behavior of $\lambda_{\pm}$ and $\mu$ for large $N$. In fact, 
\begin{equation}
(A_{11}+A_{22}) + \sqrt{\Delta} \ge A_{11}+A_{22} \ge p,
\end{equation}
\begin{equation}
A_{12},\,A_{21} = O\left(\frac{1}{N^{1-\alpha}}\right),
\end{equation}
for large $N$. This gives the $L^2$ asymptotic behavior \eqref{lambda}. The fact that the decay rate $\mu=A_{12}(\lambda_+-\frac{g(0)}{f(0)})$ is independent of $N$ follows by further requiring $\frac{g(0)}{f(0)}<\lambda_+/2$ (with $\lambda_+$ as given by Lemma \ref{lem_ode}) and then 
\begin{equation}
\mu \ge A_{12}\lambda_+/2 = \Big((A_{11}+A_{22}) + \sqrt{\Delta}\Big)/2 \ge p/2.
\end{equation}

{\bf STEP 4}: $L^\infty$ separation.

Finally we prove the $L^\infty$ separation \eqref{thm1_3}. We rewrite \eqref{dhatx} as
\begin{equation}\label{hbxi}\begin{split}
\dot{\hat{\bx}}_i = & \frac{1}{N_1}\sum_{i'\ne i} \psi^+_{i,i'} (\hat{\bx}_{i'}-\hat{\bx}_i) - \frac{1}{N_2}\sum_{j} \psi^-_{i,j} (\hat{\by}_j-\hat{\bx}_i) - \hat{\Psi}^-_i (\bar{\by}-\bar{\bx}) \\
& - \frac{1}{N_1}\sum_{i'} \hat{\Psi}^-_{i'} \hat{\bx}_{i'}+\frac{1}{N_2}\sum_j \hat{\Psi}^-_j \hat{\by}_j \\
= &  - (\Psi^+_i-\Psi^-_i)\hat{\bx}_i + S,  \\
\end{split}\end{equation}
where the source term
\begin{equation}
S := \frac{1}{N_1}\sum_{i'\ne i} \psi^+_{i,i'} \hat{\bx}_{i'} - \frac{1}{N_2}\sum_{j} \psi^-_{i,j} \hat{\by}_j - \hat{\Psi}^-_i (\bar{\by}-\bar{\bx}) - \frac{1}{N_1}\sum_{i'} \hat{\Psi}^-_{i'} \hat{\bx}_{i'}+\frac{1}{N_2}\sum_j \hat{\Psi}^-_j \hat{\by}_j,
\end{equation}
can be estimated by
\begin{equation}\begin{split}
|S|^2 \le & 5\left(\left|\frac{1}{N_1}\sum_{i'\ne i} \psi^+_{i,i'} \hat{\bx}_{i'}\right|^2 + \left| \frac{1}{N_2}\sum_{j} \psi^-_{i,j} \hat{\by}_j\right|^2 + \left| \hat{\Psi}^-_i (\bar{\by}-\bar{\bx})\right|^2 + \left|\frac{1}{N_1}\sum_{i'} \hat{\Psi}^-_{i'} \hat{\bx}_{i'}\right|^2 + \left| \frac{1}{N_2}\sum_j \hat{\Psi}^-_j \hat{\by}_j\right|^2 \right) \\
\le & 5(\var(\bx) + \var(\by) + D(\Psi^-)^2 |\bar{\bx}-\bar{\by}|^2 + \var(\bx) + \var(\by)) \\
\le & 10\left(\var(\bx)+\var(\by) + D(\Psi^-)^2|\bar{\bx}-\bar{\by}|^2\right),
\end{split}\end{equation}
by using the fact that $\psi^+_{i,i'}$ and $\psi^-_{i,j}$ are at most 1. The smallness of $|S|^2$ (compared with $|\bar{\bx}-\bar{\by}|^2$, in view of the $L^2$ estimates \eqref{thm1_2}) enables us to estimate the decay of $|\hat{\bx}_i|^2$. In fact, multiplying \eqref{hbxi} by $\hat{\bx}_i$ and using $\Psi^+_i-\Psi^-_i \ge  \bar{\Psi}^+-\bar{\Psi}^- - D(\Psi^+)-D(\Psi^-)$ gives
\begin{equation}\begin{split}
\frac{1}{2}\frac{\rd}{\rd{t}}|\hat{\bx}_i|^2 \le &  - (\bar{\Psi}^+-\bar{\Psi}^- - D(\Psi^+)-D(\Psi^-))|\hat{\bx}_i|^2 +  |S\cdot\hat{\bx}_i| \\
\le &  - (p-\frac{p}{24}-q-\frac{p}{24}-\frac{p}{12}-\frac{p}{3})|\hat{\bx}_i|^2 +  \frac{3|S|^2}{4p}\\
\le &  - (\frac{p}{2}-q)|\hat{\bx}_i|^2 +  \frac{10}{p}\left(\var(\bx)+\var(\by) + D(\Psi^-)^2|\bar{\bx}-\bar{\by}|^2\right)\\
\le &  - (\frac{p}{2}-q)|\hat{\bx}_i|^2 +  \frac{10}{p}(\lambda_-+\lambda(0)e^{-\mu t} + \frac{1}{N^{1-\alpha}})|\bar{\bx}-\bar{\by}|^2\\
\le &  - (\frac{p}{2}-q)|\hat{\bx}_i|^2 +  C(\lambda(0) e^{-\mu t} + \frac{1}{N^{1-\alpha}})|\bar{\bx}-\bar{\by}|^2,\\
\end{split}\end{equation}
by using \eqref{psi_cond} and \eqref{thm1_2}, and $C$ denotes a constant\footnote{In the rest of this proof, different $C$ may denote different constants which are all independent of $N$.} independent of $N$. By taking $i$ as the index with maximal $|\hat{\bx}_i|^2$ and conducting similar estimate for $|\hat{\by}_j|^2$, we obtain
\begin{equation}\begin{split}
\frac{1}{2}\frac{\rd}{\rd{t}}(\max_i |\hat{\bx}_i|^2 + \max_j |\hat{\by}_j|^2) \le &  - (\frac{p}{2}-q)(\max_i |\hat{\bx}_i|^2 + \max_j |\hat{\by}_j|^2) +  C(\lambda(0)e^{-\mu t} + \frac{1}{N^{1-\alpha}})|\bar{\bx}-\bar{\by}|^2.\\
\end{split}\end{equation}
Integrating in time gives
\begin{equation}\label{infty1}\begin{split}
\max_i |\hat{\bx}_i(t)|^2 + \max_j |\hat{\by}_j(t)|^2 \le &  e^{-(p-2q) t}(\max_i |\hat{\bx}_i(0)|^2 + \max_j |\hat{\by}_j(0)|^2) \\
& +  C\int_0^t e^{-(p-2q)(t-s)}(\lambda(0)e^{-\mu s} + \frac{1}{N^{1-\alpha}})|\bar{\bx}(s)-\bar{\by}(s)|^2\rd{s}.\\
\end{split}\end{equation}

Next we notice that using \eqref{thm1_2} in \eqref{exp1} (with a different choice $c_1=q/24$) gives an exponential growth estimate
\begin{equation}\label{exp1_1}\begin{split}
\frac{\rd}{\rd{t}}|\bar{\bx}-\bar{\by}|^2 \ge & ( 4\bar{\Psi}^- - \frac{q}{12}) |\bar{\bx}-\bar{\by}|^2 - \frac{96D(\Psi^-)^2}{q}(\var(\bx)+\var(\by))\\
\ge & \left( 4q - \frac{q}{6} - \frac{q}{12} - \frac{96D(\Psi^-)^2}{q} (\lambda_-+\lambda(0)e^{-\mu t})\right)|\bar{\bx}-\bar{\by}|^2 \\
\ge & 2q|\bar{\bx}-\bar{\by}|^2,
\end{split}\end{equation}
if $t\ge t_1$, with $t_1$ being independent of $N$ (which follows from $D(\Psi^-)^2\le 1/N^{1-\alpha}$ and $\lambda(0) \le \lambda_+ = O(N^{1-\alpha})$). Therefore
\begin{equation}\label{exp3}
|\bar{\bx}(t)-\bar{\by}(t)|^2 \ge e^{2q(t-s)}|\bar{\bx}(s)-\bar{\by}(s)|^2,\quad \forall t\ge s \ge t_1.
\end{equation}
Using this in \eqref{infty1}, we obtain
\begin{equation}\begin{split}
& \max_i |\hat{\bx}_i(t)|^2 + \max_j |\hat{\by}_j(t)|^2 \\
\le &  C_1e^{-(p-2q) t} +  C\int_{t_1}^t e^{-(p-2q)(t-s)}(\lambda(0)e^{-\mu s} + \frac{1}{N^{1-\alpha}})|\bar{\bx}(s)-\bar{\by}(s)|^2\rd{s}\\
\le &  C_1e^{-(p-2q) t} +  C\int_{t_1}^t e^{-(p-2q)(t-s)}(\lambda(0)e^{-\mu s} + \frac{1}{N^{1-\alpha}})e^{-2q(t-s)}\rd{s} \cdot |\bar{\bx}(t)-\bar{\by}(t)|^2\\
\le &  C_1e^{-(p-2q) t} +  C\left(\lambda(0) e^{-\mu t} +  \frac{1}{N^{1-\alpha}} \right)|\bar{\bx}(t)-\bar{\by}(t)|^2,\\
\end{split}\end{equation}
with 
\begin{equation}\label{C1}
C_1 = \max_i |\hat{\bx}_i(0)|^2 + \max_j |\hat{\by}_j(0)|^2 + C\int_0^{t_1} e^{(p-2q) s}(e^{-\mu s} + \frac{1}{N^{1-\alpha}})|\bar{\bx}(s)-\bar{\by}(s)|^2\rd{s}.
\end{equation}
Therefore
\begin{equation}\label{infty_decay}\begin{split}
\frac{\max_i |\hat{\bx}_i(t)|^2 + \max_j |\hat{\by}_j(t)|^2 }{|\bar{\bx}(t)-\bar{\by}(t)|^2}
\le &  \frac{C_1}{|\bar{\bx}(t)-\bar{\by}(t)|^2}e^{-(p-2q) t} +  C\left(\lambda(0) e^{-\mu t} +  \frac{1}{N^{1-\alpha}} \right).\\
\end{split}\end{equation}
Finally we notice that the second inequality in \eqref{exp1_1} implies that 
\begin{equation}
\frac{\rd}{\rd{t}}|\bar{\bx}-\bar{\by}|^2 \ge -C_2|\bar{\bx}-\bar{\by}|^2,
\end{equation}
for $0\le t\le t_1$, with $C_2$ independent of $N$. Therefore, for $0\le s \le t_1$,
\begin{equation}
|\bar{\bx}(t_1)-\bar{\by}(t_1)|^2 \ge e^{-C_2(t_1-s)}|\bar{\bx}(s)-\bar{\by}(s)|^2,
\end{equation}
which implies
\begin{equation}
|\bar{\bx}(s)-\bar{\by}(s)|^2 \le e^{C_2t_1}|\bar{\bx}(t_1)-\bar{\by}(t_1)|^2 \le e^{C_2t_1}e^{-2q(t-t_1)}|\bar{\bx}(t)-\bar{\by}(t)|^2,
\end{equation}
where we used \eqref{exp3} in the last inequality. It follows that
\begin{equation}\begin{split}
\frac{C_1}{|\bar{\bx}(t)-\bar{\by}(t)|^2}e^{2q t} = & \frac{\max_i |\hat{\bx}_i(0)|^2 + \max_j |\hat{\by}_j(0)|^2}{|\bar{\bx}(0)-\bar{\by}(0)|^2}\cdot \frac{|\bar{\bx}(0)-\bar{\by}(0)|^2}{|\bar{\bx}(t)-\bar{\by}(t)|^2}e^{2qt} \\
& + C\int_0^{t_1} e^{(p-2q) s}(e^{-\mu s} + \frac{1}{N^{1-\alpha}})\frac{|\bar{\bx}(s)-\bar{\by}(s)|^2}{|\bar{\bx}(t)-\bar{\by}(t)|^2}\rd{s}\cdot e^{2qt} \\
\le &  e^{C_2t_1+2qt_1}\tilde{\lambda}(0) + Ce^{C_2t_1+2qt_1+pt_1} \le C(1+\tilde{\lambda}(0)).
\end{split}\end{equation}
Therefore the first term on RHS of \eqref{infty_decay} behaves like $C(1+\tilde{\lambda}(0))e^{-pt}$ which decays faster than $e^{-\mu t}$. Therefore \eqref{infty_decay} gives the $L^\infty$ separation \eqref{thm1_3} since $\lambda(0)\le \tilde{\lambda}(0)$.

\end{proof}

\begin{remark}\label{rem_proof1}
In fact, \eqref{psi_cond} are the only conditions on the coefficient matrices used in this proof. Therefore, the proof applies to \eqref{eq} (not necessarily with \eqref{rand1}) on $[0,T]$, as long as the coefficient matrices satisfies \eqref{psi_cond} for any $t\in [0,T]$. This will be the starting point of the proof of Theorem \ref{thm2} given below.
\end{remark}

\begin{proof}[Proof of Theorem \ref{thm2}]

Recall that Theorem \ref{thm2} assumes the coefficient matrices are given by the random piecewise constants \eqref{rand2}. We will fix the choice of $\alpha\in (0,1)$. Fix $K>0$ to be chosen, it is clear that with large probability, \eqref{psi_cond} holds for $[0,T],\,T=K\tau$. In fact, for any $\epsilon>0$, for any fixed $k$, if $N\ge N_0(\epsilon,p,q)$, then (by Lemmas \ref{lem_psi1} and \ref{lem_psi2}) with probability $1-\epsilon$, \eqref{psi_cond} holds for $[k\tau,(k+1)\tau]$. By the independence of the coefficient matrices for different $k$,
\begin{equation}
P\Big(\text{\eqref{psi_cond} holds for $[0,K\tau]$}\Big) \ge (1-\epsilon)^K \rightarrow 1,\quad \text{ as } \epsilon\rightarrow 0,
\end{equation}
which gives \eqref{psi_cond} for $t\in [0,T]$ with large probability. 

The assumption \eqref{thm1_1} on the initial data also holds for large $N$, since $\lambda(0)\le \tilde{\lambda}(0) \le \Lambda \le \lambda_+(N)$. Therefore, by Remark \ref{rem_proof1}, the estimate \eqref{thm1_3} holds for \eqref{eq} with \eqref{rand2} with large probability, on the fixed time interval $[0,T]$. Since $\lambda_-\sim 1/N^{1-\alpha}$ and $\tilde{\lambda}(0)\le \Lambda$, by taking $N$ and $K$ large enough (in terms of $(\Lambda,\tau,p,q,\kappa)$, with $K$ being independent of $N$), one can guarantee that the RHS of \eqref{thm1_3} at time $T=K\tau$ is no more than $1/16$. Therefore, with large probability, if $N$ is sufficiently large, then
\begin{equation}\label{lambdaT}
\tilde{\lambda}(T) \le \frac{1}{16},\quad T = K\tau,\quad K = K(\Lambda,\tau,p,q,\kappa).
\end{equation}
Notice that
\begin{equation}
|\hat{\bx}_i\cdot(\bar{\by}-\bar{\bx})| \le |\hat{\bx}_i|\cdot|\bar{\by}-\bar{\bx}| \le \sqrt{\tilde{\lambda}}|\bar{\by}-\bar{\bx}|^2,
\end{equation}
and similar for $\by$. Thus by \eqref{lambdaT}, for any $i$ and $j$,
\begin{equation}\begin{split}
(\by_j(T)-\bx_i(T))\cdot \frac{\bar{\by}(T)-\bar{\bx}(T)}{|\bar{\by}(T)-\bar{\bx}(T)|^2} = & \hat{\by}_j(T)\cdot \frac{\bar{\by}(T)-\bar{\bx}(T)}{|\bar{\by}(T)-\bar{\bx}(T)|^2} - \hat{\bx}_i(T)\cdot \frac{\bar{\by}(T)-\bar{\bx}(T)}{|\bar{\by}(T)-\bar{\bx}(T)|^2} + 1 \\
\ge 1 - \frac{1}{4} - \frac{1}{4} = \frac{1}{2} > 0.
\end{split}\end{equation} 
Therefore \eqref{thm2_2} holds at time $T$, with the vector\footnote{This means that at time $T$ the two groups are separated by the perpendicular bisector of the segment connecting $\bar{\bx}(T)$ and $\bar{\by}(T)$.} $\bv=\frac{\bar{\by}(T)-\bar{\bx}(T)}{|\bar{\by}(T)-\bar{\bx}(T)|}$.

Finally we show that \eqref{thm2_2} at time $T$ implies \eqref{thm2_2} for all time $t\ge T$. In fact, \eqref{thm2_2} at time $T$ implies the existence of a constant $c$ such that
\begin{equation}
\bx_i(T)\cdot \bv < c,\quad \by_j(T)\cdot \bv > c,\quad \forall i,\,j.
\end{equation}
Taking dot product of \eqref{eq} with $\bv$ gives
\begin{equation}\begin{split}
& \dot{x}_i = \frac{1}{N_1}\sum_{i'\ne i} \psi^+_{i,i'}(t) (x_{i'}-x_i) + \frac{1}{N_2}\sum_{j} \psi^-_{i,j}(t) (y_j+x_i), \\
& \dot{y}_j = \frac{1}{N_2}\sum_{j'\ne j} \psi^+_{j,j'}(t) (y_{j'}-y_j) + \frac{1}{N_1}\sum_{i} \psi^-_{j,i}(t) (x_i+y_j), \\
\end{split}\end{equation}
where $x_i:=-(\bx_i\cdot\bv - c)$, $y_j:=\by_j\cdot\bv - c$ satisfy $x_i(T)>0,\,y_j(T)>0$. This ODE system clearly propagates the positiveness of $\{x_i\}$ and $\{y_j\}$ if all the coefficients $\psi^+_{i,i'},\psi^-_{i,j},\psi^+_{j,j'}$ are nonnegative. Therefore
\begin{equation}
\bx_i(t)\cdot \bv < c,\quad \by_j(t)\cdot \bv > c,\quad \forall i,\,j,
\end{equation}
for all $t\ge T$, and \eqref{thm2_2} follows.
 
\end{proof}

\section{Numerical results}

In this section we provide a few numerical results verifying Theorems \ref{thm1} and \ref{thm2}. 

For both models (\eqref{eq} with \eqref{rand1} and \eqref{eq} with \eqref{rand2}), we take the spatial dimension to be one, $N_1=N_2=:N$, and
\begin{equation}
p = 0.3,\quad q = 0.2,
\end{equation}
while for  \eqref{eq} with \eqref{rand2}, we take the time step $\tau=1$ for changing communication matrix. We take the initial data $\{x_i(0)\}$ and $\{y_j(0)\}$ to be i.i.d. uniformly distributed random variables in $[0,1]$, and compute the solution at $T=20$ to \eqref{eq} exactly by using matrix exponentials. Figure~\ref{fig1} shows one simulation for each model with $N=40$. For the sake of demonstration\footnote{Without this normalization, $x_i(t)$ and $y_j(t)$ may exhibit exponential growth in time, which makes it hard to demonstrate the extent of separation.}, at each $t$ we normalize the vector $(x_1(t),\dots,x_N(t),y_1(t),\dots,y_N(t))$ to have $\ell^2$ norm equal to $\sqrt{2N}$, and this does not change the dynamics since \eqref{eq} is linear. 

\begin{figure}
\begin{center}
	\includegraphics[width=0.49\textwidth]{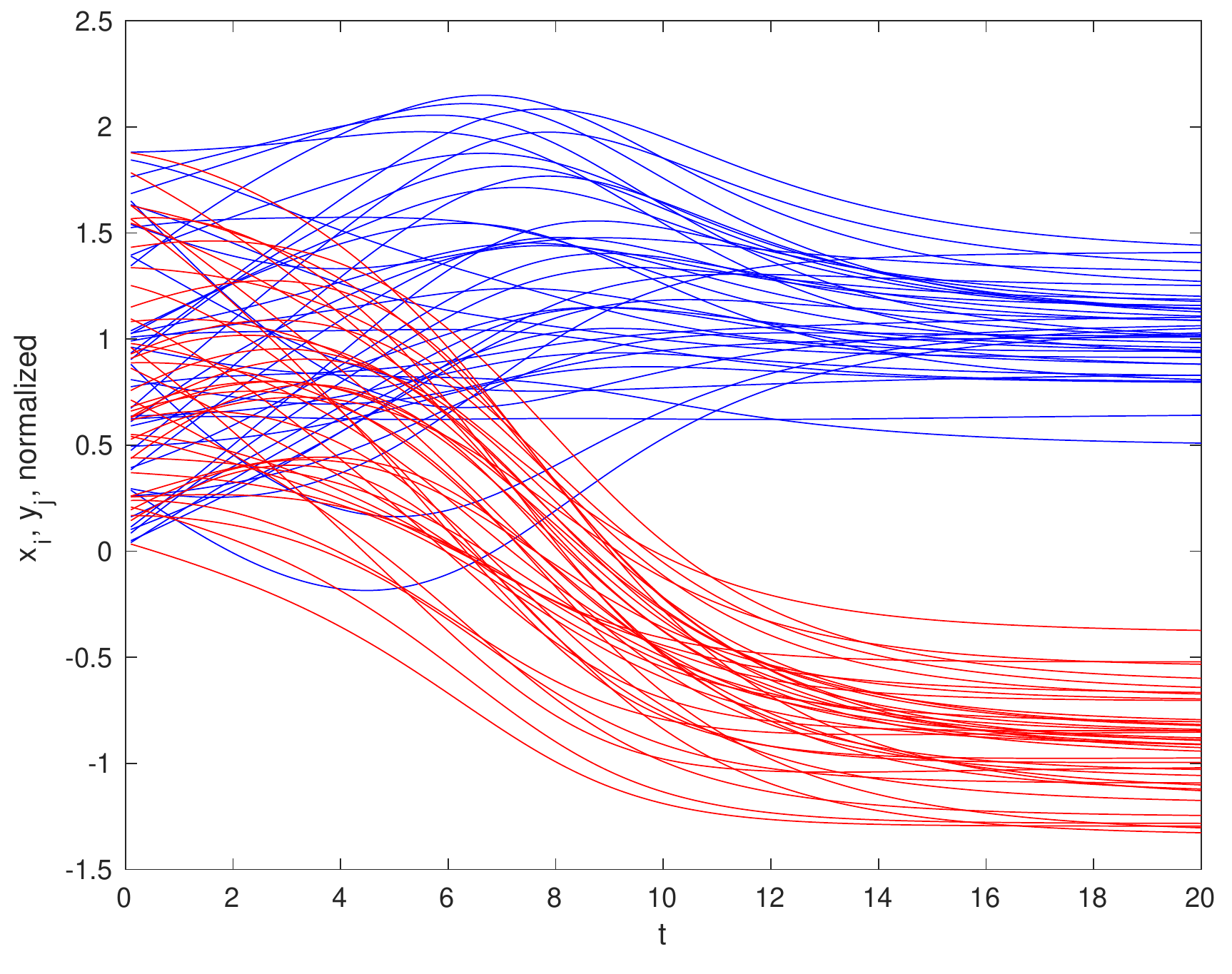}
	\includegraphics[width=0.49\textwidth]{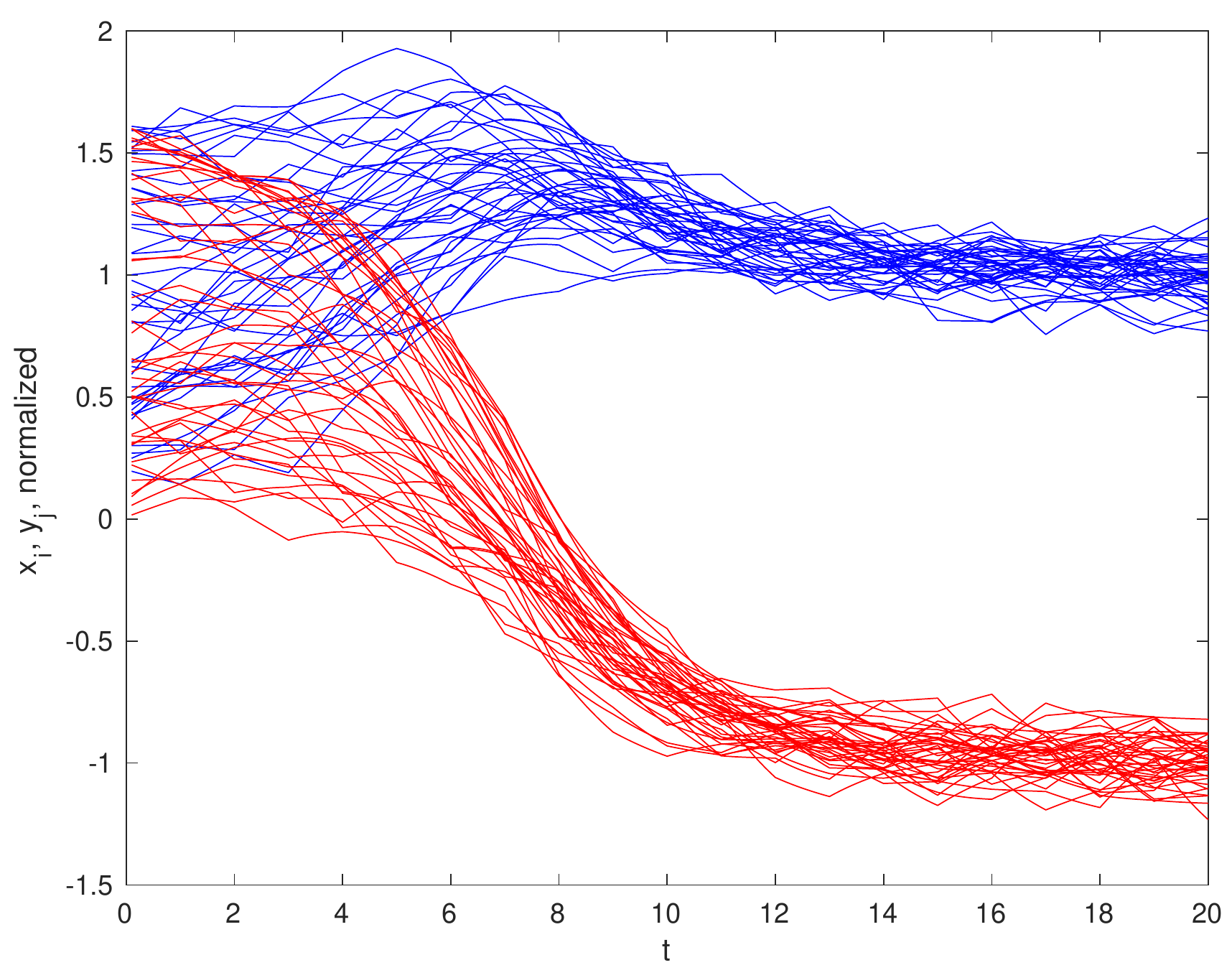}
	\caption{Simulation for \eqref{eq}, with $N_1=N_2=40$. Left: with \eqref{rand1}; right: with \eqref{rand2}. The horizontal axis is time, and the vertical axis is $x_i(t)$ and $v_j(t)$, normalized. Red and blue curves represent the individuals in the first and second groups, respectively.}
	\label{fig1}
\end{center}	
\end{figure}

One can see that for both models, the two groups completely separate from each other. Also, after getting to a certain extent of separation, $\var(x)$ and $\var(y)$ no longer shrink, compared to $|\bar{x}-\bar{y}|^2$. This is exactly as predicted by Theorem \ref{thm1}: the separation indicator $\lambda(t)$, as defined in \eqref{thm1_2}, decays to some positive constant $\lambda_-$ but does not necessarily converge to zero.

To further investigate the limiting behavior of $\lambda(t)$ as $t$ getting large for \eqref{eq} with \eqref{rand1}, we compute this model with randomly sampled initial data and coefficient matrices for $n_{test}=10000$ times, for various values of $N$. The sample average of $\lambda(T)$ is computed by discarding $n_{discard}=100$ samples with largest $\lambda(T)$ which may contain the cases with small probability for which the separation claimed in Theorem \ref{thm1} may fail. The relation of $N$ and the sample average of $\lambda(T)$ is plotted in Figure \ref{fig2}. It is clear that the latter behaves like $O(1/N)$ for large $N$, which means that the estimate $\lambda_- \le C/N^{1-\alpha},\, \forall 0<\alpha<1$, given in \eqref{lambda}, is optimal.

\begin{figure}
\begin{center}
	\includegraphics[width=0.70\textwidth]{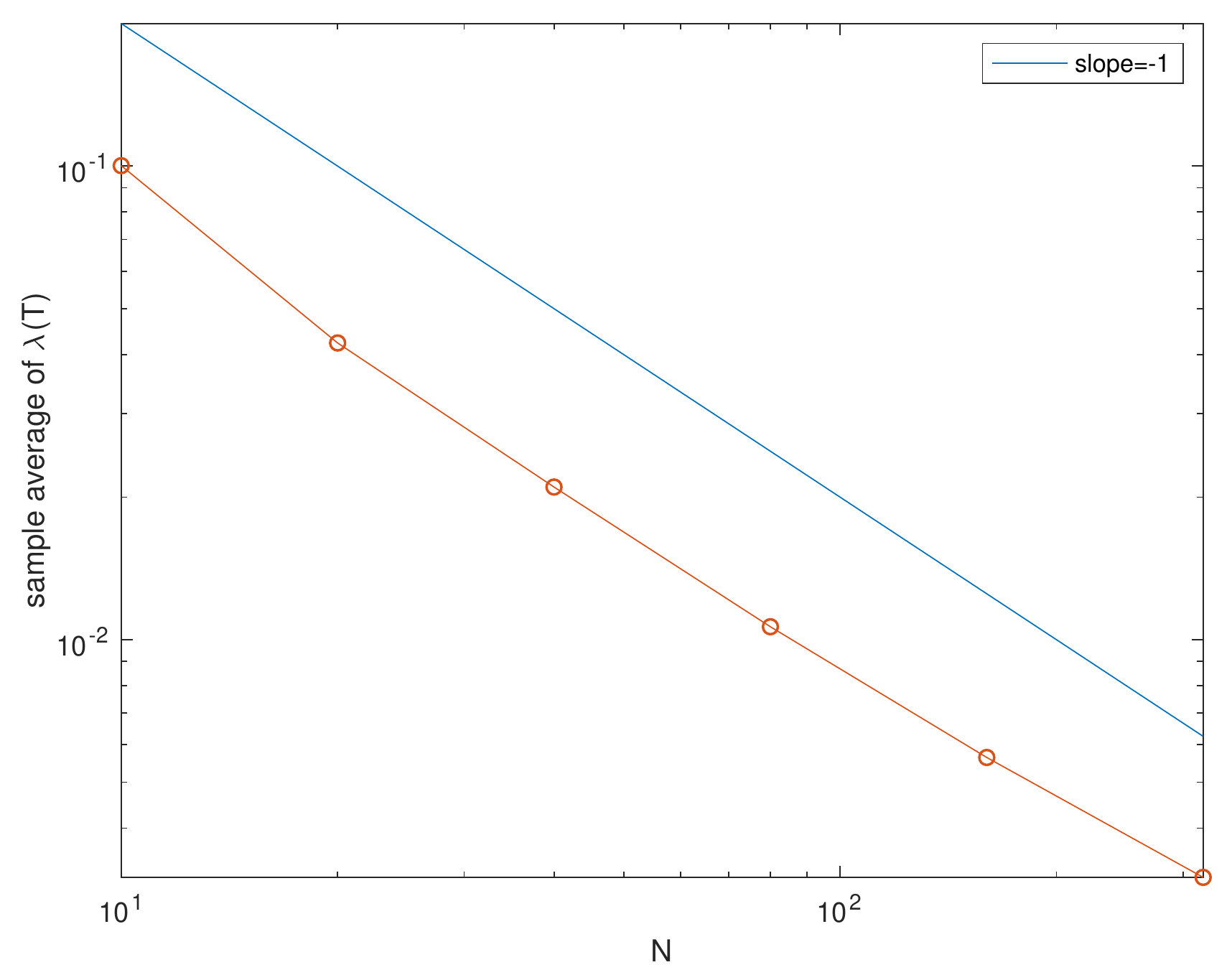}
	\caption{Sample average of $\lambda(T)$, $T=20$, for \eqref{eq} with \eqref{rand1}, for various $N$. $n_{test}=10000$ samples are taken, while $n_{discard}=100$ samples with largest $\lambda(T)$ are discarded. The horizontal axis is $N$, the circles are sample averages of $\lambda(T)$, and the straight line is slope -1.}
	\label{fig2}
\end{center}	
\end{figure}

\section{Conclusion}

In this paper we propose collective dynamical models describing the interaction of two opposing groups with stochastic communication, where inter-group and intra-group communication functions have opposite signs. Under reasonable assumptions we prove the {\it separation} of opinions of the two groups. Compared with existing results for collective dynamics, our results suggest:
\begin{itemize}
\item Anti-alignment between opposing groups and alignment within the same group may lead to separation of two clusters, instead of consensus. This may happen even if the initial data is {\it well-mixed}.
\item With stochastic communication, the expected large time behavior may still be observed with large probability.
\end{itemize}
There are a few future topics to study:
\begin{itemize}
\item Generalizing our results to cases where the coefficients depend on the locations $\bx_i$ and $\by_j$.
\item Generalizing our results to models with more than two groups. Numerical examples in~\cite{JLL} have shown the separation for three groups.
\item Second-order Cucker-Smale models~\cite{CS1,CS2,HT} with anti-alignment and possibly stochastic communication. Some partial results in this direction are already obtained by~\cite{FHJ} in deterministic setting.
\end{itemize}

\bibliographystyle{plain}
\bibliography{twogroup_separation_bib}

\end{document}